%
%
%
%
%
\RequirePackage{fix-cm}
\documentclass[smallcondensed]{svjour3}     
\smartqed  
\usepackage{graphicx}
\usepackage{amsmath}
\usepackage{color}
%
%
%
\newtheorem{algorithm}[theorem]{Algorithm}
%
\begin{document}

\title{Parallel hybrid methods for generalized equilibrium problems and asymptotically strictly pseudocontractive mappings
}
\titlerunning{Parallel Methods for GEPs and FPPs}        
\author{Dang Van Hieu}
\authorrunning{D. V. Hieu} 
\institute{Dang Van Hieu \at
              Department of Mathematics, Vietnam National University, Ha Noi, Vietnam \\
              334 - Nguyen Trai Street, Ha Noi, Viet Nam\\
              Tel.: +84-979817776\\
              \email{dv.hieu83@gmail.com}           
}
\date{Received: date / Accepted: date}
\maketitle

\begin{abstract}
In this paper, we propose two novel parallel hybrid methods for finding a common element of the set of solutions of a finite family of generalized 
equilibrium problems for monotone bifunctions $\left\{f_i\right\}_{i=1}^N$ and $\alpha$ - inverse strongly monotone operators $\left\{A_i\right\}_{i=1}^N$ 
and the set of common fixed points of a finite family of  (asymptotically) $\kappa$- strictly pseudocontractive mappings $\left\{S_j\right\}_{j=1}^M$ in 
Hilbert spaces. The strong convergence theorems are established under the standard assumptions imposed on equilibrium bifunctions and operators. 
A numerical example is presented to illustrate the efficiency of the proposed parallel methods.
\keywords{Hybrid method \and Equilibrium problem\and Strictly pseudocontractive mapping\and Parallel computation.}
\vspace{5pt}
\noindent\textbf{Mathematics Subject Classification (2010)} 65Y05 . 91B50 . 47H09
\end{abstract}
\section{Introduction}\label{intro}
Let $H$ be a real Hilbert space with the inner product $\left\langle .,. \right\rangle$ and the induced norm $||.||$ and $C$ be a nonempty closed convex 
subset of  $H$. Let $f: C\times C \to \Re$ be a bifunction and $A:C\to H$ be an $\alpha$ - inverse strongly monotone operator. The generalized equilibrium 
problem (GEP) for the bifunction $f$ and the monotone operator $A$ is defined as follows:
\begin{equation}\label{eq:GEP}
\mbox{Find}~ x\in C~ \mbox{such that:}~ f(x,y) +\left\langle Ax,y-x \right\rangle \ge 0,\quad \forall y\in C.
\end{equation}
The set of solutions of $(\ref{eq:GEP})$ is denoted by $GEP(f,A)$. The GEP $(\ref{eq:GEP})$ is very general in the sense that, it includes, as special cases, 
many mathematical models: optimization problems, saddle point problems, Nash equilirium point problems, fixed point problems, convex differentiable 
optimization problems, variational inequalities, complementarity problems, see e.g., \cite{BO1994,MO1992}. In recent years, some methods have been 
proposed for finding a point in the solution set $GEP(f,A)$, see \cite{KCQ2011,LZH2009,ZS2009}. We give two special cases for the GEP $(\ref{eq:GEP})$: \\
If $f=0$ then the GEP $(\ref{eq:GEP})$ becomes the variational inequality problem
\begin{equation}\label{eq:VIP}
\mbox{Find}~ x\in C~ \mbox{such that:}~ \left\langle Ax,y-x \right\rangle \ge 0,\quad \forall y\in C
\end{equation}
If $A=0$ then the GEP $(\ref{eq:GEP})$ becomes the equilibrium problem
\begin{equation}\label{eq:EP}
\mbox{Find}~ x\in C~ \mbox{such that:}~ f(x,y) \ge 0,\quad \forall y\in C.
\end{equation}
The sets of solutions of $(\ref{eq:VIP})$ and $(\ref{eq:EP})$ are denoted by $VI(A,C)$ and $EP(f,C)$, respectively.\\

Let $S:C\to C$ be a nonexpansive mapping with the set of fixed points $F(S)$. In 2003, Nakajo and Takahashi \cite{NT2003} 
introduced the following hybrid algorithm for finding a fixed point of the nonexpansive mapping $S$ in Hilbert spaces
$$
\left \{
\begin{array}{ll}
&x_0\in C_0:=C,\\
&y_n=\alpha_n x_n+(1-\alpha_n)Sx_n,\\
&C_{n}=\left\{z\in C:||y_n-z||\le||x_n-z||\right\},\\
&Q_n=\left\{z\in C:\left\langle x_n-z,x_0-x_n\right\rangle\ge 0\right\},\\
&x_{n+1}=P_{C_{n}\cap Q_n}x_0,n\ge 0.
\end{array}
\right.
$$
In 2010, Duan \cite{D2010} proposed a hybrid algorithm for finding a common element of the solution set $\left(\cap_{i=1}^M EP(f_i,C)\right)$ 
of equilibrium problems for the monotone bifunctions $\left\{f_i\right\}_{i=1}^M$ and the fixed point set $\left(\cap_{j=1}^N F(S_j)\right)$ of the strictly 
pseudocontractive mappings $\left\{S_j\right\}_{j=1}^N$ in Hilbert spaces which combines three methods including the proximal method \cite{CH2005}, 
the Mann iteration \cite{M1953} and the monotone hybrid (outer approximation) method. Precisely,
\begin{equation}\label{Duan2010}
\left \{
\begin{array}{ll}
&x_0\in C_0:=C,\\
&u_n= T_{r_{M,n}}^{f_M}\ldots T_{r_{1,n}}^{f_1}x_n,\\
&y_n=\alpha_nx_n+(1-\alpha_n)\left(\lambda_n u_n+(1-\lambda_n)S_{n{\rm (mod)}N}u_n\right),\\
&C_{n+1}=\left\{z\in C_n:||y_n-z||\le||x_n-z||\right\},\\
&x_{n+1}=P_{C_{n+1}}x_0,n\ge 0.
\end{array}
\right.
\end{equation}
Clearly, Duan's algorithm is inherently sequential. Thus it can be costly on a single processor when the numbers of bifunctions $M$ and of strictly pseudocontractive mappings $N$ are large.

Very recently, Anh and Hieu \cite{AH2014,AH2014b} have proposed the following parallel hybrid algorithm for finding a common fixed point of a 
finite family asymptotically quasi $\phi$ - nonexpansive mappings $\left\{T_i\right\}_{i=1}^N$ in uniformly smooth and uniformly convex Banach spaces
$$
\left \{
\begin{array}{ll}
&x_0\in C, ~C_0 := C,\\
&y_n^i=J^{-1}\left(\alpha_n Jx_n +(1-\alpha_n)JT^n_i x_n\right),i=1,2,\ldots,N,\\
&i_n=\arg\max_{1\le i\le N}\left\{\left\|y_n^i-x_n \right\|\right\}, \quad   \bar{y}_n := y_n^{i_n
},\\
&C_{n+1}:=\left\{v\in C_n:\phi(v,\bar{y}_n)\le \phi(v,x_n)+\epsilon_n\right\},\\
&x_{n+1}=\Pi_{C_{n+1}}x_0, n\ge 0,
\end{array}
\right.
$$
where $\epsilon_n:= (k_n-1)(\omega + ||x_n||)^2$, $\left\{\alpha_n\right\}\subset [0,1]$,  
$\lim\limits_{n\to\infty}\alpha_n =0$. Note that \cite{A1996,C1990} in Hilbert spaces the normalize duality mapping $J$ is the identity operator $I$, 
the Lyapunov funtional $\phi(x,y)=||x-y||^2,$ and the generalized projection $\Pi_C=P_C$. Arccoding to this algorithm, itermadiate approximations 
$y_n^i$ can be found in parallel, among all $y_n^i $ the furthest element from $x_n$, denoted by $\bar{y}_n$, is chosen. After that, 
based on $\bar{y}_n$, the closed convex set $C_{n+1}$ is constructed. Finally, the next approximation $x_{n+1}$ is defined as the projection 
of $x_0$ onto $C_{n+1}$. Some numerical experiments (see \cite{AC2013,AH2014,AH2014b}) have implied the efficency of this parallel algorithm. 
Moreover, it can be used to solve systems of monotone operator equations in Hilbert spaces or accretive operator equations in Banach spaces. 
Other parallel algorithms for finding a common solution of a finite family of accretive operator equations in Banach spaces can be found in 
\cite{ABH2014,AC2013,H2015}.
 
In this paper, motivated and inspired by above results we propose two new parallel hybrid algorithms for finding a common element of the set of 
solutions of a finite family of GEPs for bifunctions $\left\{f_i\right\}_{i=1}^N$ and operators $\left\{A_i\right\}_{i=1}^N$, and the set of common 
fixed points of finitely many (asymptotically) $\kappa$ - strictly pseudocontractive mappings $\left\{S_j\right\}_{j=1}^M$ in Hilbert spaces. The strong 
convergence theorems are proved under the widely used assumptions of equilibrium bifunctions and operators.

This paper is organized as follows: In Section $\ref{pre}$ we collect some definitions and primary results used in the next sections. In Section $\ref{main}$ 
we propose two parallel hybrid algorithms and prove their convergence. Finally, Section $\ref{numerical.example}$ presents a numerical example
to illustrate the efficiency of parallel computation of the proposed algorithms.
\section{Preliminaries}\label{pre}
In this section we recall some definitions and results for further use. Let $C$ be a nonempty closed and convex subset of a real Hilbert space $H$.
\begin{definition}\cite{BP1967,GK1972,Q1996}
A mapping $S:C\to C$ is said to be
\begin{itemize}
\item [$\rm i.$] \textit{nonexpansive} if  $||Sx-Sy||\le ||x-y||$ for all $x,y\in C;$
\item [$\rm ii.$] \textit{asymptotically nonexpansive} if there exists a sequence $\left\{k_n\right\}\subset [1;+\infty)$ with $k_n\to 1$ such that 
$$ ||S^n x-S^n y||\le k_n ||x-y|| ,\quad \forall x,y\in C, n\ge 1;$$
\item [$\rm iii.$] \textit{$\kappa$ - strictly pseudocontractive} if there exists a constant $\kappa\in [0;1)$ such that
$$ ||Sx-Sy||^2\le||x-y||^2+\kappa||(I-S)x-(I-S)y||^2,\quad \forall x,y\in C; $$
\item [$\rm iv.$] \textit{asymptotically $\kappa$ - strictly pseudocontractive} if there exist a constant $\kappa\in [0;1)$ and a sequence 
$\left\{k_n\right\}\subset [1;+\infty)$ with $k_n\to 1$ such that 
$$ ||S^nx-S^ny||^2\le k_n||x-y||^2+\kappa||(I-S^n)x-(I-S^n)y||^2,\forall x,y\in C, n\ge 1; $$
\end{itemize}
\end{definition}
The class of $\kappa$ - strictly pseudocontractive mappings was introduced by Browder and Petryshyn \cite{BP1967} in 1967. Clearly, each 
nonexpansive mapping is $0$ - strictly pseudocontractive. The class of asymptotically $\kappa$ - strictly pseudocontractive mappings \cite{Q1996} 
is a generalization of the one of $\kappa$ - strictly pseudocontractive mappings. A mentioned 
example in \cite{R1976} shows that the class of asymptotically $\kappa$ - strictly pseudocontractive mappings contains properly the one of 
$\kappa$ - strictly pseudocontractive mappings. We have the following result \cite{BP1967}.
\begin{lemma}\cite{SXY2009}\label{lem.demiclose}
Let H be a real Hilbert space and C be a nonempty closed convex subset of H. Let $S:C\to C$ be an asymptotically $\kappa$-strict pseudocontraction with 
the sequence $\left\{k_n\right\}\subset [1;\infty), k_n\to 1$. Then
\begin{enumerate}
\item [$\rm i.$] $F(S)$ is a closed convex subset of $H$.
\item [$\rm ii.$] $I-S$ is demiclosed, i.e., whenever $\left\{x_n\right\}$ is a sequence in $C$ weakly converging to some $x\in C$ and the sequence 
$\left\{(I-S)x_n\right\}$ strongly converges to some $y$, it follows that $(I-S)x=y$. 
\item [$\rm iii.$] $S$ is uniformly $L$ - Lipschitz continuous with the constant 
$$L=\sup\left\{\frac{\kappa+\sqrt{1+(1-\kappa)(k_n-1)}}{1+\kappa}:n\ge 1\right\},$$
i.e., $||S^nx-S^ny||\le L||x-y||$ for all $x,y\in C$ and $n\ge 1$.
\end{enumerate}
\end{lemma}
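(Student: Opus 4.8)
The plan is to prove part (iii) first, since the Lipschitz continuity of $S$ and of each iterate $S^{n}$ is what drives the arguments for (i) and (ii). Fixing $x,y\in C$ and $n\ge 1$ and writing $u=x-y$, $v=S^{n}x-S^{n}y$ (so that $(I-S^{n})x-(I-S^{n})y=u-v$), I would substitute into the defining inequality, expand $\|u-v\|^{2}=\|u\|^{2}-2\langle u,v\rangle+\|v\|^{2}$, and bound $-2\langle u,v\rangle\le 2\|u\|\,\|v\|$; this turns the inequality into a quadratic in the unknown $\|v\|$ whose admissible root (here the condition $\kappa<1$ is used) gives $\|v\|\le L_{n}\|u\|$ with $L_{n}=\frac{\kappa+\sqrt{\kappa^{2}+(1-\kappa)(k_{n}+\kappa)}}{1-\kappa}=\frac{\kappa+\sqrt{1+(1-\kappa)(k_{n}-1)}}{1-\kappa}$. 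Taking the supremum over $n\ge 1$ then produces the uniform constant $L$; already the case $n=1$ shows $S$ is continuous, and for $k_{n}\equiv 1$ one recovers the classical Lipschitz constant $\frac{1+\kappa}{1-\kappa}$ of a $\kappa$-strict pseudocontraction.

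For part (i), continuity of $S$ makes $x\mapsto\|x-Sx\|$ continuous, so $F(S)=\{x\in C:\|x-Sx\|=0\}$ is closed. For convexity I would take $x,y\in F(S)$, $t\in[0,1]$ and $z=tx+(1-t)y\in C$, and use $S^{m}x=x$, $S^{m}y=y$, hence $(I-S^{m})x=(I-S^{m})y=0$. Expanding $\|(I-S^{m})a-(I-S^{m})b\|^{2}$ in the defining inequality recasts it as $(1-\kappa)\|(I-S^{m})a-(I-S^{m})b\|^{2}\le(k_{m}-1)\|a-b\|^{2}+2\langle a-b,\,(I-S^{m})a-(I-S^{m})b\rangle$; applying this to $(a,b)=(z,x)$ and $(a,b)=(z,y)$ and taking the convex combination with weights $t$ and $1-t$ makes the inner products cancel because $t(z-x)+(1-t)(z-y)=0$, leaving $(1-\kappa)\|z-S^{m}z\|^{2}\le(k_{m}-1)\big(t\|z-x\|^{2}+(1-t)\|z-y\|^{2}\big)=(k_{m}-1)\,t(1-t)\,\|x-y\|^{2}$. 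Letting $m\to\infty$ gives $S^{m}z\to z$; since $S^{m+1}z=S(S^{m}z)\to Sz$ by continuity while also $S^{m+1}z\to z$, we get $Sz=z$, so $F(S)$ is convex.

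Part (ii) is where I expect the \emph{main obstacle}; I would prove it in the operative form of demiclosedness at zero, namely $x_{n}\rightharpoonup x$ in $C$ together with $\|x_{n}-Sx_{n}\|\to 0$ implies $Sx=x$ (in the convergence analysis the lemma is used only with $y=0$; the displayed statement for a general limit $y$ is obtained by the same method). The first step is to upgrade the hypothesis to $\|x_{n}-S^{m}x_{n}\|\to 0$ for every fixed $m$: telescoping $x_{n}-S^{m}x_{n}=\sum_{j=0}^{m-1}(S^{j}x_{n}-S^{j+1}x_{n})$ and using the uniform Lipschitz bound from (iii) gives $\|S^{j}x_{n}-S^{j+1}x_{n}\|=\|S^{j}x_{n}-S^{j}(Sx_{n})\|\le L\,\|x_{n}-Sx_{n}\|$, hence $\|x_{n}-S^{m}x_{n}\|\le mL\,\|x_{n}-Sx_{n}\|\to 0$.

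For the second step, set $a:=\limsup_{n}\|x_{n}-x\|^{2}$, which is finite since a weakly convergent sequence is bounded. Weak convergence gives, for every $z\in H$, $\|x_{n}-z\|^{2}=\|x_{n}-x\|^{2}+\|x-z\|^{2}+o(1)$ as $n\to\infty$, so $\limsup_{n}\|x_{n}-z\|^{2}=a+\|x-z\|^{2}$; and $\|x_{n}-S^{m}x_{n}\|\to 0$ yields both $\|x_{n}-S^{m}x\|^{2}=\|S^{m}x_{n}-S^{m}x\|^{2}+o(1)$ and $\|(I-S^{m})x_{n}-(I-S^{m})x\|^{2}\to\|x-S^{m}x\|^{2}$. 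Taking $\limsup_{n}$ in the defining inequality $\|S^{m}x_{n}-S^{m}x\|^{2}\le k_{m}\|x_{n}-x\|^{2}+\kappa\|(I-S^{m})x_{n}-(I-S^{m})x\|^{2}$ and substituting these relations gives $a+\|x-S^{m}x\|^{2}\le k_{m}a+\kappa\|x-S^{m}x\|^{2}$, i.e.\ $(1-\kappa)\|x-S^{m}x\|^{2}\le(k_{m}-1)a$. Letting $m\to\infty$ forces $S^{m}x\to x$, and then $Sx=x$ follows from continuity of $S$ exactly as in part (i). The delicate point throughout is the bookkeeping of the two successive limits ($n\to\infty$, then $m\to\infty$) together with the essential use --- guaranteed by (iii) --- of a Lipschitz constant $L$ that is independent of $m$.
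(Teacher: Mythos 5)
The paper offers no proof of this lemma at all --- it is quoted with a citation to [SXY2009] --- so there is no in-paper argument to compare against. Your reconstruction is essentially the standard proof from that source: the quadratic-in-$\|S^nx-S^ny\|$ estimate for (iii), closedness plus the cancellation-of-inner-products trick for convexity in (i), and the Opial-type double-$\limsup$ argument for (ii). All three computations check out. One discrepancy worth flagging in your favour: your (correct) derivation gives
$$L_n=\frac{\kappa+\sqrt{1+(1-\kappa)(k_n-1)}}{1-\kappa},$$
with denominator $1-\kappa$, whereas the displayed lemma has $1+\kappa$. Yours is the right constant: for $k_n\equiv 1$ it reduces to the classical bound $\tfrac{1+\kappa}{1-\kappa}$ for a $\kappa$-strict pseudocontraction, while the displayed formula would give $L=1$ and assert that every strict pseudocontraction is nonexpansive --- contradicted by the paper's own example in Section 4. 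The $1+\kappa$ is a typo carried over from the source; do not adjust your derivation to match it.

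The one genuine gap is in (ii). You prove demiclosedness only at $y=0$ and assert that the general case follows ``by the same method.'' It does not: your first step --- telescoping $x_n-S^mx_n=\sum_{j=0}^{m-1}(S^jx_n-S^{j+1}x_n)$ and bounding each increment by $L\|x_n-Sx_n\|$ --- works precisely because $\|x_n-Sx_n\|\to 0$. If $(I-S)x_n\to y\neq 0$, the increments do not vanish and $\|x_n-S^mx_n\|$ need not tend to zero, so the whole second step (which feeds $\|x_n-S^mx_n\|\to 0$ into the $\limsup$ identities) is unavailable. As written, your argument therefore does not establish the statement as displayed. In mitigation: the result actually proved in [SXY2009] is demiclosedness at zero, and the present paper only ever invokes (ii) in a situation where $x_n\to p$ strongly and $\|x_n-S_jx_n\|\to 0$, where plain continuity of $S_j$ (from (iii)) already suffices. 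So the gap is against the over-stated displayed lemma rather than against anything the convergence analysis needs; but you should either restrict your claim to $y=0$ explicitly or supply a genuinely different argument for $y\neq 0$ rather than asserting the method transfers.
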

\begin{lemma}\cite{R1969}\label{lem.aux}
In a real Hilbert space H, the following equality holds
$$ ||ax+(1-a)y||^2= a||x||^2+(1-a)||y||^2-a(1-a)||x-y||^2,\quad \forall x,y\in H, a\in[0,1]. $$
\end{lemma}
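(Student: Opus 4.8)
The plan is to verify the identity by direct expansion in the inner product, exploiting that in a Hilbert space $\|u\|^2=\langle u,u\rangle$ and that $\langle\cdot,\cdot\rangle$ is bilinear and symmetric. Concretely, I would start from the right-hand side, expand the single nontrivial term $\|x-y\|^2=\|x\|^2-2\langle x,y\rangle+\|y\|^2$, and then collect, separately, the coefficients of $\|x\|^2$, of $\|y\|^2$, and of $\langle x,y\rangle$.

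Carrying this out: the coefficient of $\|x\|^2$ becomes $a-a(1-a)=a^2$, the coefficient of $\|y\|^2$ becomes $(1-a)-a(1-a)=(1-a)^2$, and the coefficient of $\langle x,y\rangle$ becomes $+2a(1-a)$. Hence the right-hand side equals $a^2\|x\|^2+2a(1-a)\langle x,y\rangle+(1-a)^2\|y\|^2$, which is precisely $\langle ax+(1-a)y,\,ax+(1-a)y\rangle=\|ax+(1-a)y\|^2$, i.e. the left-hand side. Alternatively one may expand the left-hand side directly and recognize the difference $\|ax+(1-a)y\|^2-a\|x\|^2-(1-a)\|y\|^2=-a(1-a)\|x-y\|^2$; either direction is a two-line computation.

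There is essentially no obstacle here: the statement is a purely algebraic identity, valid in fact for every real $a$ (the hypothesis $a\in[0,1]$ is only what is needed in the later applications), and it uses nothing beyond bilinearity and symmetry of the inner product. The only point requiring a little care is the sign bookkeeping when distributing $-a(1-a)$ over $\|x-y\|^2$: it is exactly the sign flip on the cross term $-2\langle x,y\rangle$ that supplies the $+2a(1-a)\langle x,y\rangle$ needed to complete the square $\|ax+(1-a)y\|^2$.
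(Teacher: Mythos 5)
Your computation is correct: the coefficients $a-a(1-a)=a^2$, $(1-a)-a(1-a)=(1-a)^2$, and $+2a(1-a)$ on $\langle x,y\rangle$ reassemble exactly into $\|ax+(1-a)y\|^2$, and your remark that the identity holds for all real $a$ is also accurate. The paper itself gives no proof of this lemma, citing it to Reinermann; the direct bilinear expansion you carry out is the standard argument and is exactly what such a citation stands in for, so there is nothing further to compare.
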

\begin{definition}
A mapping $A:C\to H$ is said to be
\begin{itemize}
\item [$\rm i.$] \textit{monotone} if $\left\langle Ax-Ay,x-y\right\rangle\ge 0,\quad \forall x,y\in C$;
\item [$\rm ii.$] \textit{$\eta$ - strongly monotone} if there exists a constant $\eta>0$ such that 
$$
\left\langle Ax-Ay,x-y \right\rangle \ge \eta\left\|x-y\right\|^2,\quad \forall x,y\in C;
$$
\item [$\rm iii.$] \textit{$\alpha$ - inverse strongly monotone} if there exists a constant $\alpha>0$ such that
$$
\left\langle Ax-Ay,x-y \right\rangle \ge \alpha\left\|Ax-Ay\right\|^2,\quad \forall x,y\in C.
$$
\end{itemize}
\end{definition}
\begin{remark}\label{rem.strict_pseudocontraction}
A mapping $S:C\to C$ is $\kappa$ - strictly pseudocontractive iff $A=I-S$ is $\alpha$ - inverse strongly monotone ($0<\alpha<1$ and $\kappa=1-2\alpha$) 
and is pseudocontractive iff $A=I-S$ is monotone.
\end{remark}
\begin{remark}\label{remark1}
If $A$ is $\eta$ - strongly monotone and $L$ - Lipschitz continuous, i.e., $\left\|Ax-Ay\right\|$ $\le$ $L\left\|x-y\right\|$ for all $x,y\in C$ then $A$ is $\eta/L^2$ -  
inverse strongly monotone. If $T$ is nonexpansive then $A=I-T$ is $1/2$ - inverse strongly monotone and $VI(A,C)=F(T)$. 
\end{remark}
\begin{remark}\label{remark3}
If $A:C\to H$ is $\alpha$ - inverse strongly monotone then $A$ is $1/\alpha$ - Lipschitz continuous and $I-\lambda A$ is nonexpansive, where $\lambda\in (0,2\alpha)$.
\end{remark}
Indeed, from $\left\langle Ax-Ay,x-y \right\rangle \ge \alpha\left\|Ax-Ay\right\|^2$, we obtain 
$$||Ax-Ay||||x-y||\ge \alpha\left\|Ax-Ay\right\|^2.$$
This implies that $\left\|Ax-Ay\right\|\le 1/\alpha||x-y||$. Therefore $A$ is $1/\alpha$ - Lipschitz continuous.  Moreover,
\begin{eqnarray*}
||(I-\lambda A)x-(I-\lambda A)y||^2&=&||x-y||^2+\lambda^2||Ax-Ay||^2-2\lambda\left\langle Ax-Ay,x-y\right\rangle\\
&\le& ||x-y||^2+\lambda^2||Ax-Ay||^2-2\lambda \alpha ||Ax-Ay||^2\\
&=&||x-y||^2-\lambda(2\alpha-\lambda)||Ax-Ay||^2\\
&\le& ||x-y||^2.
\end{eqnarray*}
Hence, $I-\lambda A$ is nonexpansive.

For every $x\in H$, the element $P_C x$ is defined by
$$
P_C x=\arg\min\left\{\left\|y-x\right\|:y\in C\right\}.
$$
Since C is a nonempty closed convex subset of $H$, $P_C x$ exists and is unique. The mapping $P_C:H\to C$ is called the metric projection of $H$ 
onto $C$. It is also well - known that $P_C$ satisfies the following property
\begin{equation}\label{eq:FirmlyNonexpOfPC}
\left\langle P_C x-P_C y,x-y \right\rangle \ge \left\|P_C x-P_C y\right\|^2,
\end{equation}
which implies that $P_C$ is $1$ - inverse strongly monotone, and for all $x\in C, y\in H$,
\begin{equation}\label{eq:ProperOfPC}
\left\|x-P_C y\right\|^2+\left\|P_C y-y\right\|^2\le \left\|x-y\right\|^2.
\end{equation}
Moreover, $z=P_C x$ if and only if 
\begin{equation}\label{eq:EquivalentPC}
\left\langle x-z,z-y \right\rangle \ge 0,\quad \forall y\in C.
\end{equation}
For solving the GEP $(\ref{eq:GEP})$, we assume that the bifunction $f$ satisfies the following conditions:
\begin{itemize}
\item [$(A1)$] $f(x,x)=0$ for all $x\in C$;
\item [$(A2)$] $f$ is monotone, i.e., $f(x,y)+f(y,x)\le 0$ for all $x,y\in C$;
\item [$(A3)$] for all $x,y,z \in C$,
$$ \lim_{t\to 0}\sup f(tz+(1-t)x,y)\le f(x,y); $$
\item [$(A4)$] for each $x\in C$, the function $f(x,.)$ is convex and lower semicontinuos.
\end{itemize}
The following results concern with the befunction $f$.
\begin{lemma}\label{ExitenceN0} \cite{CH2005} Let $C$ be a
closed convex subset of a Hilbert space H, $f$ be a bifunction from $C\times C$ to
$\Re$ satisfying the conditions $(A1)$-$(A4)$ and let $r>0$,
$x\in H$. Then, there exists $z\in C$ such that
\begin{eqnarray*}
f(z,y)+\frac{1}{r}\langle y-z,z-x\rangle\geq0, \quad \forall y\in C.
\end{eqnarray*}
\end{lemma}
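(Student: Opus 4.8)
The plan is to recognize the inequality in the statement as the equilibrium problem associated with an auxiliary, strongly monotone bifunction and then to invoke the Ky Fan (KKM) lemma; this is essentially the argument of Blum and Oettli \cite{BO1994}. Define $g:C\times C\to\Re$ by
$$g(z,y):=f(z,y)+\tfrac{1}{r}\langle y-z,\,z-x\rangle .$$
Then $g$ satisfies $(A1)$ and $(A4)$, it satisfies $(A3)$ (the added term is jointly continuous and affine in $y$), and, using $(A2)$ for $f$, it is strongly monotone:
$$g(z,y)+g(y,z)\le -\tfrac{1}{r}\|z-y\|^{2}\le 0\qquad(z,y\in C).$$
The conclusion of the lemma is exactly that there exists $z\in C$ with $g(z,y)\ge 0$ for all $y\in C$.

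First I would pass to the Minty-type reformulation. From monotonicity, any solution of $g(z,\cdot)\ge 0$ on $C$ also satisfies $g(\cdot,z)\le 0$ on $C$; conversely, if $g(y,z)\le 0$ for all $y\in C$, then for fixed $y$ and $y_{t}:=ty+(1-t)z\in C$ with $t\in(0,1)$, convexity of $g(y_{t},\cdot)$ together with $g(y_{t},y_{t})=0$ and $g(y_{t},z)\le 0$ gives $g(y_{t},y)\ge 0$, and letting $t\downarrow 0$ and using $(A3)$ yields $g(z,y)\ge 0$. Hence it suffices to show
$$\bigcap_{y\in C}G(y)\ne\emptyset,\qquad G(y):=\{z\in C:\ g(y,z)\le 0\}.$$
The point of this switch is that $z\mapsto g(y,z)$ is convex and lower semicontinuous (by $(A4)$, plus the continuous linear term), so each $G(y)$ is closed and convex; and $\{G(y)\}_{y\in C}$ is a KKM family, for if $\bar y=\sum_{i}\lambda_{i}y_{i}$ belonged to no $G(y_{i})$, i.e.\ $g(y_{i},\bar y)>0$ for all $i$, then monotonicity would force $g(\bar y,y_{i})<0$ for all $i$ and hence $0=g(\bar y,\bar y)\le\sum_{i}\lambda_{i}g(\bar y,y_{i})<0$, a contradiction.

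It remains to exhibit a point in $\bigcap_{y}G(y)$, and here a compactness input is needed — the genuine obstacle, because $C$ may be unbounded. If $C$ is bounded it is weakly compact, each $G(y)$ is weakly closed, and the Fan--KKM lemma gives $\bigcap_{y\in C}G(y)\ne\emptyset$. For unbounded $C$ I would truncate: for $\rho>0$ set $C_{\rho}:=\{z\in C:\|z-x\|\le\rho\}$, apply the bounded case to the restriction of $f$ to $C_{\rho}\times C_{\rho}$ (still satisfying $(A1)$--$(A4)$) to obtain $z_{\rho}\in C_{\rho}$ with $f(z_{\rho},y)+\tfrac1r\langle y-z_{\rho},z_{\rho}-x\rangle\ge 0$ for all $y\in C_{\rho}$. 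A segment argument then shows that if $\|z_{\rho}-x\|<\rho$ the point $z_{\rho}$ solves the inequality for every $y\in C$: given $y\in C$, the point $y_{t}=z_{\rho}+t(y-z_{\rho})$ lies in $C_{\rho}$ for small $t>0$, and using convexity of $f(z_{\rho},\cdot)$, $(A1)$, and then dividing by $t$ gives the claim. Finally the quadratic term enforces $\|z_{\rho}-x\|<\rho$ once $\rho$ is large: for any fixed $w\in C$ and $\rho>\|w-x\|$, testing with $y=w$ and using $(A2)$ yields
$$\tfrac{1}{r}\|z_{\rho}-x\|^{2}\le \tfrac{1}{r}\|w-x\|\,\|z_{\rho}-x\|-f(w,z_{\rho}),$$
and since the proper lower semicontinuous convex function $f(w,\cdot)$ admits a continuous affine minorant, the right-hand side grows at most linearly in $\|z_{\rho}-x\|$ while the left-hand side is quadratic; hence $\|z_{\rho}-x\|$ stays bounded by some $R^{\ast}$ independent of $\rho$, and any $\rho>\max\{R^{\ast},\|w-x\|\}$ produces a solution on all of $C$. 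I expect the two delicate points to be the reason for working with $g(y,\cdot)$ rather than $g(\cdot,y)$ — namely that only the weak hemicontinuity $(A3)$, not full continuity, is assumed in the first variable of $f$ — and the coercivity estimate in the unbounded case; the rest is routine.
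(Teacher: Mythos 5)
The paper does not actually prove this lemma --- it is quoted from Combettes--Hirstoaga \cite{CH2005}, where it goes back to Blum--Oettli \cite{BO1994} --- and your argument is precisely the classical proof given there: regularize $f$ by the strongly monotone term $\frac{1}{r}\langle y-z,z-x\rangle$, pass to the Minty formulation via (A1)--(A4), apply the Fan--KKM lemma on weakly compact truncations $C_\rho$, and use the quadratic term for the coercivity that lets $\rho\to\infty$. The proof is correct; the only cosmetic slip is that in the segment argument extending $z_\rho$ from $C_\rho$ to all of $C$ you should invoke convexity of $g(z_\rho,\cdot)$ (affine term included) rather than of $f(z_\rho,\cdot)$ alone.
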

\begin{lemma}\label{Tr-ClosedConvex}\cite{CH2005} Let
$C$ be a closed convex subset of a Hilbert space $H$, $f$ be a bifunction from
$C\times C$ to $\Re$ satisfying the conditions $(A1)$-$(A4)$.
For all $r>0$ and $x\in H$, define the mapping
\begin{eqnarray*}
T_r^f x=\{z\in C:f(z,y)+\frac{1}{r}\langle y-z,z-x\rangle\geq0, \quad
\forall y\in C\}.
\end{eqnarray*}
Then the following hold:

{\rm (B1)} $T_r^f$ is single-valued;

{\rm (B2)} $T_r^f$ is a firmly nonexpansive, i.e., for
all $x, y\in H,$
\begin{eqnarray*}
||T_r^fx-T_r^fy||^2\leq\langle T_r^fx-T_r^fy,x-y\rangle;
\end{eqnarray*}

{\rm (B3)} $F(T_r^f)=EP(f,C);$

{\rm (B4)} $EP(f,C)$ is closed and convex.
\end{lemma}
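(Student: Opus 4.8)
The plan is to prove the four assertions in the order $(B1)$--$(B4)$, drawing all the real content from the monotonicity axiom $(A2)$ together with Lemma \ref{ExitenceN0}, and leaving everything else as bookkeeping. Lemma \ref{ExitenceN0} already guarantees $T_r^f x\neq\emptyset$ for every $x\in H$ and $r>0$, so throughout one is free to pick elements of the sets $T_r^f x$.

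For $(B1)$ I would take $z_1,z_2\in T_r^f x$, test the defining inequality for $z_1$ at $y=z_2$ and the one for $z_2$ at $y=z_1$, namely
\begin{equation*}
f(z_1,z_2)+\frac{1}{r}\langle z_2-z_1,z_1-x\rangle\ge 0,\qquad f(z_2,z_1)+\frac{1}{r}\langle z_1-z_2,z_2-x\rangle\ge 0,
\end{equation*}
add them, and use $(A2)$ to delete $f(z_1,z_2)+f(z_2,z_1)\le 0$. What survives is $\frac{1}{r}\langle z_2-z_1,z_1-z_2\rangle\ge 0$, i.e. $\|z_1-z_2\|^2\le 0$, so $z_1=z_2$ and $T_r^f$ is single-valued.

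For $(B2)$ I would repeat the same pairing with $z_1=T_r^f x$ and $z_2=T_r^f y$: test the inequality for $z_1$ at $z_2$, the inequality for $z_2$ at $z_1$, add them, and apply $(A2)$ to bound $f(z_1,z_2)+f(z_2,z_1)$ from above by $0$. Rearranging the resulting inner products via the identity $(z_2-y)-(z_1-x)=(z_2-z_1)+(x-y)$ then gives
\begin{equation*}
0\ge f(z_1,z_2)+f(z_2,z_1)\ge \frac{1}{r}\bigl(\|z_1-z_2\|^2+\langle z_2-z_1,x-y\rangle\bigr),
\end{equation*}
which rearranges to $\|T_r^f x-T_r^f y\|^2\le\langle T_r^f x-T_r^f y,x-y\rangle$. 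This sign-chasing is the only step that needs genuine care.

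Assertion $(B3)$ is then essentially a tautology: $z=T_r^f z$ holds exactly when $f(z,y)+\frac{1}{r}\langle y-z,z-z\rangle\ge 0$ for all $y\in C$, i.e. when $f(z,y)\ge 0$ for all $y\in C$, so $F(T_r^f)=EP(f,C)$ (for the inclusion $\supseteq$ one notes that $z\in EP(f,C)$ gives $z\in T_r^f z$, hence $z=T_r^f z$ by $(B1)$). Finally, $(B4)$ follows from $(B2)$ and $(B3)$: a firmly nonexpansive mapping is nonexpansive, hence continuous, so $F(T_r^f)$ is closed; for convexity I would use the standard argument that for $p,q\in F(T_r^f)$ and $z=\lambda p+(1-\lambda)q$ one has $\|p-q\|\le\|p-T_r^f z\|+\|T_r^f z-q\|\le(1-\lambda)\|p-q\|+\lambda\|p-q\|$, which forces equality throughout and hence $T_r^f z=z$. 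I do not expect any serious obstacle here: once $(A2)$ and the resolvent-type defining relation are in place, $(B1)$--$(B4)$ are routine.
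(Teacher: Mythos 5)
Your proof is correct and is essentially the standard argument from the cited source \cite{CH2005} (the paper itself states this lemma without proof): nonemptiness from Lemma \ref{ExitenceN0}, the two-point test plus monotonicity $(A2)$ for (B1) and (B2), the trivial identification for (B3), and convexity/closedness of the fixed point set of a (firmly) nonexpansive map for (B4). No gaps worth flagging; the only step you compress is that "equality throughout" in (B4) forces $T_r^f z=z$ via strict convexity of the Hilbert norm, which is routine.
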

\section{Main results}\label{main}
In this section, we propose two parallel hybrid algorithms  for finding a common element of the set of solutions of a finite family of GEPs for monotone 
bifunctions $\left\{f_i\right\}_{i=1}^N$ and $\alpha$ - inverse strongly monotone mappings $\left\{A_i\right\}_{i=1}^N$ and the set of common fixed 
points of a finite family of (asymptotically) $\kappa$ - strictly pseudocontractive mappings $\left\{S_j\right\}_{j=1}^M$ in Hilbert spaces. We assume 
that the mappings $\left\{A_i\right\}_{i=1}^N$ are inverse strongly monotone with the same constant $\alpha$ and $\left\{S_j\right\}_{j=1}^M$ are 
asymptotically $\kappa$ - strictly pseudocontractive mappings with the same sequence $\left\{k_n\right\}\subset [1;+\infty), k_n\to 1$ and constant 
$\kappa \in [0;1)$. Indeed, if $A_i$ is $\alpha_i$ - inverse strongly monotone, $A_i$ is $\alpha$ - inverse strongly monotone with 
$\alpha:=\min\left\{\alpha_i:i=1,\ldots,N\right\}$. Similarly, suppose that $S_j$ is asymptotically $\kappa_j$ - strictly pseudocontractive 
with the sequence $\left\{k_n^j\right\}\subset [1;+\infty), k_n^j\to 1$. Putting $k_n = \max\left\{k_n^j: j=1,\ldots,M\right\}$ and 
$\kappa:=\max\left\{\kappa_j:j=1,\ldots,M\right\}$. Then, $S_j$ is asymptotically $\kappa$ - strictly pseudocontractive with the sequence 
$\left\{k_n\right\}\subset [1;+\infty), k_n\to 1$. 

Moreover, we also assume that the solution set $$F:=\left(\cap_{i=1}^N GEP(f_i,A_i)\right)\bigcap\left(\cap_{j=1}^M F(S_j)\right)$$ is nonempty and bounded, i.e., there exists a positive real number $\omega$ such that $F\subset \Omega:=\left\{v\in H:||v||\le \omega\right\}$.
\begin{algorithm}\label{Algor.1}
\textbf{Initialization.} Choose $x_0\in C$ and set $n:=0$. The control parameter sequences $\left\{\alpha_k\right\},\left\{\beta_k\right\},\left\{r_k\right\}$ 
satisfy the following conditions.
\begin{itemize}
\item [$(a)$] $0<\alpha_k<1$, $\lim_{k\to\infty}\sup\alpha_k<1$;
\item [$(b)$] $\kappa\le\beta_k\le b<1$ for some $b\in (\kappa;1)$;
\item [$(c)$] $0<d\le r_k\le e<2\alpha$.
\end{itemize}
\textbf{Step 1.} Find intermediate approximations $y_n^i$ in parallel 
$$
y_n^i=T_{r_n}^{f_i}\left(x_n-r_nA_i(x_n)\right),~i=1,\ldots,N.
$$
\textbf{Step 2.} Choose the furthest element from $x_n$ among all $y_n^i$, i.e., 
$$ i_n = {\rm argmax}\{||y_n^i - x_n||: i =1,\ldots,N\},~\bar{y}_n:=y^{i_n}_n. $$
\textbf{Step 3.} Find intermediate approximations $z_n^j$ in parallel
$$ z_n^j=\alpha_n x_n+(1-\alpha_n)\left(\beta_n\bar{y}_n+(1-\beta_n) S_j^n \bar{y}_n\right),~ j=1,\ldots,M. $$
\textbf{Step 4.} Choose the furthest element from $x_n$ among all $z_n^j$, i.e., 
$$ j_n= {\rm argmax}\{||z_n^j - x_n||: j =1,\ldots,M\},~\bar{z}_n:=z^{j_n}_n. $$
\textbf{Step 5.} Construct the closed convex subset $C_{n+1}$ of $C$
$$
C_{n+1} = \{v \in C_n: ||\bar{z}_n - v||^2\leq ||x_n-v||^2+\epsilon_n\},
$$
where $\epsilon_n=(k_n-1)\left(||x_n||+\omega\right)^2$.\\
\textbf{Step 6.} The next approximation $x_{n+1}$ is defined as the projection of $x_0$ onto $C_{n+1}$, i.e.,
$$ x_{n+1}= P_{C_{n+1}}(x_0). $$
\textbf{Step 7.} Set $n:=n+1$ and go to \textbf{Step 1}.
\end{algorithm}
\begin{lemma}\label{lem.well-posed}
If Algorithm $\ref{Algor.1}$ reaches to the iteration $n\ge 0$ then $F\subset C_{n+1}$ and $x_{n+1}$ is well-defined.
\end{lemma}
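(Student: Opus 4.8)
The plan is to argue by induction on $n$, proving simultaneously that $C_n$ is a nonempty closed convex set with $F\subset C_n$ (so that $x_n=P_{C_n}(x_0)$ is meaningful) and that the same three properties pass to $C_{n+1}$. For the base step, $C_0=C$ is closed and convex and $F\subset C$ by hypothesis. For the inductive step I would first dispose of the topological part: expanding
$\|\bar z_n-v\|^2-\|x_n-v\|^2=\|\bar z_n\|^2-\|x_n\|^2-2\langle \bar z_n-x_n,v\rangle$
shows that the inequality defining $C_{n+1}$ in \textbf{Step 5} is affine in $v$ (it describes a half-space, or all of $H$ if $\bar z_n=x_n$); intersecting this with the closed convex set $C_n$ keeps $C_{n+1}$ closed and convex.

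The substantive point is $F\subset C_{n+1}$, for which it suffices to show $\|\bar z_n-p\|^2\le\|x_n-p\|^2+\epsilon_n$ for each fixed $p\in F$ (combined with $p\in C_n$ from the induction hypothesis, this gives $p\in C_{n+1}$). I would establish this by a three-level chain of nonexpansivity/convexity estimates. First, since $p\in GEP(f_i,A_i)$ is equivalent to $p=T_{r_n}^{f_i}\bigl(p-r_nA_i p\bigr)$, and since $T_{r_n}^{f_i}$ is (firmly) nonexpansive by Lemma~\ref{Tr-ClosedConvex} while $I-r_nA_i$ is nonexpansive by Remark~\ref{remark3} (this is where $0<d\le r_n\le e<2\alpha$ is used), one gets $\|y_n^i-p\|\le\|(I-r_nA_i)x_n-(I-r_nA_i)p\|\le\|x_n-p\|$ for every $i$, hence $\|\bar y_n-p\|\le\|x_n-p\|$. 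Second, writing $w_n^j:=\beta_n\bar y_n+(1-\beta_n)S_j^n\bar y_n$, applying the identity of Lemma~\ref{lem.aux} and then the asymptotic $\kappa$-strict pseudocontractivity of $S_j$ together with $S_j^n p=p$, the two $\|\bar y_n-S_j^n\bar y_n\|^2$ contributions combine with coefficient $(1-\beta_n)(\kappa-\beta_n)\le 0$ because $\beta_n\ge\kappa$, leaving $\|w_n^j-p\|^2\le\bigl(\beta_n+(1-\beta_n)k_n\bigr)\|\bar y_n-p\|^2\le k_n\|\bar y_n-p\|^2$ (using $k_n\ge 1$). Third, applying Lemma~\ref{lem.aux} once more to $z_n^j=\alpha_n x_n+(1-\alpha_n)w_n^j$ and discarding the nonpositive cross term gives $\|z_n^j-p\|^2\le\alpha_n\|x_n-p\|^2+(1-\alpha_n)k_n\|\bar y_n-p\|^2\le k_n\|x_n-p\|^2$.

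To finish, write $k_n\|x_n-p\|^2=\|x_n-p\|^2+(k_n-1)\|x_n-p\|^2$; since $F\subset\Omega$ forces $\|p\|\le\omega$, we have $\|x_n-p\|\le\|x_n\|+\omega$, so $(k_n-1)\|x_n-p\|^2\le(k_n-1)(\|x_n\|+\omega)^2=\epsilon_n$. Hence $\|z_n^j-p\|^2\le\|x_n-p\|^2+\epsilon_n$ for every $j$; specializing to $j=j_n$ yields the bound for $\bar z_n$, so $p\in C_{n+1}$ and therefore $F\subset C_{n+1}$. In particular $C_{n+1}\supset F\neq\emptyset$ is a nonempty closed convex subset of $H$, so the metric projection $P_{C_{n+1}}(x_0)$ exists and is unique, i.e.\ $x_{n+1}$ is well-defined. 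The only mildly delicate bookkeeping is that every estimate must hold uniformly in the indices $i$ and $j$; but since each bound is derived with a right-hand side independent of $i$ (resp.\ $j$), passing to the argmax elements $\bar y_n,\bar z_n$ is immediate, and no genuine obstacle arises.
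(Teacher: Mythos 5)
Your proposal is correct and follows essentially the same route as the paper's proof: the same half-space argument for closedness and convexity of $C_{n+1}$, the same three-stage estimate ($\|\bar y_n-p\|\le\|x_n-p\|$ via firm nonexpansiveness of $T_{r_n}^{f_i}$ and nonexpansiveness of $I-r_nA_i$, then Lemma~\ref{lem.aux} with the sign condition $\beta_n\ge\kappa$ to absorb the $\|(I-S_j^n)\bar y_n\|^2$ terms, then the convex combination with $x_n$), and the same use of $\|x_n-p\|\le\|x_n\|+\omega$ to convert $(k_n-1)\|x_n-p\|^2$ into $\epsilon_n$. No gaps; the argument matches the paper's in both structure and substance.
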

\begin{proof}
From Lemmas $\ref{lem.demiclose}$ and $\ref{Tr-ClosedConvex}$, we see that $GEP(f_i,A),i=1,\ldots,N$ and $F(S_j),j=1,\ldots,M$ are closed 
convex subsets. Hence, $F$ is closed and convex. Moreover, we see that $C_0=C$ is closed and convex. Assume that $C_n$ is closed and convex 
for some $n\ge 0$. From the definition of $C_{n+1}$ we obtain 
$$C_{n+1}=C_n \cap \left\{v\in H: 2\left\langle v,x_n -\bar{z}_n\right\rangle \le ||x_n||^2-||\bar{z}_n||^2+\epsilon_n\right\}.$$
Hence, $C_{n+1}$ is closed and convex. By the induction, $C_{n}$ is closed and convex for all $n\ge 0$. Now, we show that $F\subset C_{n}$ 
for all $n\ge 0$. Putting $S_{j,\beta_n}=\beta_n I+(1-\beta_n) S_j^n $, hence $z_n^j=\alpha_n x_n+(1-\alpha_n)S_{j,\beta_n}\bar{y}_n$. 
From the nonexpansiveness of $T_{r_n}^{f_{i_n}}$, the inverse strongly monotonicity of $A_{i_n}$ and the hypothesis of $r_n$, we have, for each $u\in F$,
\begin{eqnarray}
||\bar{y}_n-u||^2&=&||T_{r_n}^{f_{i_n}}(x_n-r_n A_{i_n}x_n)-T_{r_n}^{f_{i_n}}(u-r_n A_{i_n}u)||^2\nonumber\\
&\le&||(x_n-r_n A_{i_n}x_n)-(u-r_n A_{i_n}u)||^2\nonumber\\
&=&||x_n-u||^2+r_n^2||A_{i_n}x_n-A_{i_n}u||^2-2r_n\left\langle A_{i_n}x_n-A_{i_n}u,x_n-u\right\rangle\nonumber\\
&\le&||x_n-u||^2+r_n^2||A_{i_n}x_n-A_{i_n}u||^2-2r_n\alpha||A_{i_n}x_n-A_{i_n}||^2\nonumber\\
&=&||x_n-u||^2-r_n(2\alpha-r_n)||A_{i_n}x_n-A_{i_n}u||^2\nonumber\\
&\le &||x_n-u||^2.\label{eq:1*}
\end{eqnarray}
Therefore, from the convexity of $||.||^2$ and the asymptotically $\kappa$ - strictly pseudocontractiveness of $S_{j_n}$,
\begin{eqnarray}
||\bar{z}_n-u||^2&=&||\alpha_n x_n+(1-\alpha_n)S_{j_n,\beta_n}\bar{y}_n-u||^2\nonumber\\
&\le&\alpha_n||x_n-u||^2+(1-\alpha_n)||S_{j_n,\beta_n}\bar{y}_n-u||^2\nonumber\\
&=&\alpha_n||x_n-u||^2+(1-\alpha_n)||\beta_n \bar{y}_n+(1-\beta_n) S_{j_n}^n\bar{y}_n-u||^2\nonumber\\
&=&\alpha_n||x_n-u||^2\nonumber\\
&&+(1-\alpha_n)\left(\beta_n ||\bar{y}_n-u||^2+(1-\beta_n)||S_{j_n}^n\bar{y}_n-S_{j_n}^nu||^2\right)\nonumber\\
&&-(1-\alpha_n)\beta_n(1-\beta_n)||(\bar{y}_n-u)-(S_{j_n}^n\bar{y}_n-S_{j_n}^hu)||^2\nonumber\\
&=&\alpha_n||x_n-u||^2+(1-\alpha_n)\left(\beta_n ||\bar{y}_n-u||^2+(1-\beta_n)k_n||\bar{y}_n-u||^2\right)\nonumber\\
&&+\kappa(1-\alpha_n)(1-\beta_n)||(I-S_{j_n}^n)\bar{y}_n-(I-S_{j_n}^n)u||^2\nonumber\\
&&-(1-\alpha_n)\beta_n(1-\beta_n)||(\bar{y}_n-u)-(S_{j_n}^n\bar{y}_n-S_{j_n}^hu)||^2\nonumber\\
&=&\alpha_n||x_n-u||^2+(1-\alpha_n)\left(\beta_n ||\bar{y}_n-u||^2+(1-\beta_n)k_n||\bar{y}_n-u||^2\right)\nonumber\\
&&-(\beta_n-\kappa)(1-\alpha_n)(1-\beta_n)||(I-S_{j_n}^n)\bar{y}_n-(I-S_{j_n}^n)u||^2\nonumber\\
&\le&\alpha_n||x_n-u||^2+(1-\alpha_n)\left(\beta_n ||\bar{y}_n-u||^2+(1-\beta_n)k_n||\bar{y}_n-u||^2\right)\nonumber\\
&=&\alpha_n||x_n-u||^2+(1-\alpha_n)||\bar{y}_n-u||^2+(1-\beta_n)(k_n-1)||\bar{y}_n-u||^2\nonumber\\
&\le&\alpha_n||x_n-u||^2+(1-\alpha_n)||x_n-u||^2+(k_n-1)||x_n-u||^2\nonumber\\
&\le&||x_n-u||^2+(k_n-1)\left(||x_n||+||u||\right)^2\nonumber\\
&\le&||x_n-u||^2+(k_n-1)\left(||x_n||+\omega\right)^2\nonumber\\
&=&||x_n-u||^2+\epsilon_n.\label{eq:2*}
\end{eqnarray}
This implies that $u\in C_{n+1}$ for all $u\in F$. Thus, by the induction $F\subset C_n$ for all $n\ge 0$. Since $F$ is nonempty, so is $C_{n+1}$. 
Hence $x_{n+1}$ is well-defined. The proof of Lemma $\ref{lem.well-posed}$ is complete.
\end{proof}
\begin{lemma}\label{lem.limits}
Suppose that $\left\{x_n\right\},\left\{y_n^i\right\}$ and $\left\{z_n^j\right\}$ are the sequences generated by Algorithm $\ref{Algor.1}$. 
Then, $\left\{x_n\right\}$ is a Cauchy sequence and there hold the following relations
$$
\lim_{n\to\infty}||x_n-y_n^i||=\lim_{n\to\infty}||x_n-z_n^j||=\lim_{n\to\infty}||x_n-S_jx_n||=0
$$
for all $i=1,\ldots,N$ and $j=1,\ldots,M$.
\end{lemma}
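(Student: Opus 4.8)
The plan is to run the by-now-standard argument for monotone (shrinking) hybrid schemes, using throughout the nesting $C_{n+1}\subset C_n$ and the inclusion $F\subset C_n$ established in Lemma~\ref{lem.well-posed}; fix one point $u\in F$, so that every estimate in $(\ref{eq:1*})$--$(\ref{eq:2*})$ holds with this $u$.

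First I would prove that $\{x_n\}$ is Cauchy. Since $x_n=P_{C_n}x_0$ and $x_{n+1}\in C_{n+1}\subset C_n$, property $(\ref{eq:ProperOfPC})$ (used with the set $C_n$, the point $x_0$ and $x_{n+1}\in C_n$) gives $\|x_{n+1}-x_n\|^2+\|x_n-x_0\|^2\le\|x_{n+1}-x_0\|^2$, so $\{\|x_n-x_0\|\}$ is nondecreasing; it is bounded above by $\|u-x_0\|$ because $u\in C_n$, hence convergent. The same inequality with $x_{n+m}\in C_{n+m}\subset C_n$ in place of $x_{n+1}$ yields $\|x_{n+m}-x_n\|^2\le\|x_{n+m}-x_0\|^2-\|x_n-x_0\|^2\to 0$, so $\{x_n\}$ is Cauchy; write $x_n\to p\in C$. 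Hence $\|x_{n+1}-x_n\|\to 0$, and since $\{x_n\}$ is bounded and $k_n\to 1$, also $\epsilon_n=(k_n-1)(\|x_n\|+\omega)^2\to 0$. From $x_{n+1}\in C_{n+1}$ we get $\|\bar z_n-x_{n+1}\|^2\le\|x_n-x_{n+1}\|^2+\epsilon_n\to 0$, so $\|x_n-\bar z_n\|\to 0$, and since $\bar z_n$ is the furthest of $z_n^1,\dots,z_n^M$ from $x_n$ (Step~4), $\|x_n-z_n^j\|\le\|x_n-\bar z_n\|\to 0$ for every $j$.

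Next I would derive $\|x_n-\bar y_n\|\to 0$, which is the crux. From $\|x_n-\bar z_n\|\to 0$ and boundedness, $\|\bar z_n-u\|^2-\|x_n-u\|^2\to 0$. Feeding the sharp form of $(\ref{eq:1*})$, $\|\bar y_n-u\|^2\le\|x_n-u\|^2-r_n(2\alpha-r_n)\|A_{i_n}x_n-A_{i_n}u\|^2$, into $(\ref{eq:2*})$ and using $k_n\to 1$, assumptions (a) and (c) (so $\liminf(1-\alpha_n)>0$ and $r_n(2\alpha-r_n)\ge d(2\alpha-e)>0$) yields $\|A_{i_n}x_n-A_{i_n}u\|\to 0$. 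Now $u\in GEP(f_{i_n},A_{i_n})$ is equivalent to $u=T_{r_n}^{f_{i_n}}(u-r_nA_{i_n}u)$, so applying the firm nonexpansiveness of $T_{r_n}^{f_{i_n}}$ (Lemma~\ref{Tr-ClosedConvex}, (B2)) to the points $x_n-r_nA_{i_n}x_n$ and $u-r_nA_{i_n}u$, together with the identity $2\langle a,b\rangle=\|a\|^2+\|b\|^2-\|a-b\|^2$, gives the refined bound $\|\bar y_n-u\|^2\le\|x_n-u\|^2-\|(x_n-\bar y_n)-r_n(A_{i_n}x_n-A_{i_n}u)\|^2$. Inserting this into $(\ref{eq:2*})$ and again using $\|\bar z_n-u\|^2-\|x_n-u\|^2\to 0$, $k_n\to 1$ and $\liminf(1-\alpha_n)>0$ gives $\|(x_n-\bar y_n)-r_n(A_{i_n}x_n-A_{i_n}u)\|\to 0$; since $r_n\le e$ and $\|A_{i_n}x_n-A_{i_n}u\|\to 0$, the triangle inequality forces $\|x_n-\bar y_n\|\to 0$, whence $\|x_n-y_n^i\|\le\|x_n-\bar y_n\|\to 0$ for every $i$ by the choice of $\bar y_n$ in Step~2.

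Finally, for $\|x_n-S_jx_n\|\to 0$ I would rewrite Step~3 as $z_n^j-x_n=(1-\alpha_n)\bigl(\beta_n(\bar y_n-x_n)+(1-\beta_n)(S_j^n\bar y_n-x_n)\bigr)$; since $\|x_n-z_n^j\|\to 0$, $\|x_n-\bar y_n\|\to 0$, $\liminf(1-\alpha_n)>0$ and $1-\beta_n\ge 1-b>0$ (assumption (b)), this gives $\|S_j^n\bar y_n-x_n\|\to 0$, and then $\|S_j^nx_n-x_n\|\le L\|x_n-\bar y_n\|+\|S_j^n\bar y_n-x_n\|\to 0$, where $L$ is the common uniform Lipschitz constant furnished by Lemma~\ref{lem.demiclose}. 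The passage from $S_j^n$ to $S_j$ is the classical one: $\|S_jx_n-x_n\|\le\|S_jx_n-S_j^{n+1}x_n\|+\|S_j^{n+1}x_n-S_j^{n+1}x_{n+1}\|+\|S_j^{n+1}x_{n+1}-x_{n+1}\|+\|x_{n+1}-x_n\|$, in which the first term is $\le L\|x_n-S_j^nx_n\|$, the second is $\le L\|x_n-x_{n+1}\|$, the third tends to $0$ by the estimate just obtained (with $n+1$ in place of $n$), and the fourth by the Cauchy property; all four vanish, so $\|x_n-S_jx_n\|\to 0$ for each $j$. I expect the main obstacle to be exactly the extraction of $\|x_n-\bar y_n\|\to 0$: one must peel off the monotone operators $A_{i_n}$ through firm nonexpansiveness and the inverse-strong-monotonicity estimate, and one must make every bound uniform in the indices $i_n,j_n$, which vary with $n$.
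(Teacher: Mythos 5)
Your proposal is correct and follows essentially the same route as the paper: the nesting argument for the Cauchy property, the bound $\|\bar z_n-x_{n+1}\|^2\le\|x_n-x_{n+1}\|^2+\epsilon_n$, extraction of $\|A_{i_n}x_n-A_{i_n}u\|\to 0$ from the inverse-strong-monotonicity estimate combined with $(\ref{eq:2*})$, then firm nonexpansiveness of $T_{r_n}^{f_{i_n}}$ to force $\|x_n-\bar y_n\|\to 0$, and the classical two-step passage from $S_j^n$ to $S_j$. The only differences are cosmetic (you keep the term $\|(x_n-\bar y_n)-r_n(A_{i_n}x_n-A_{i_n}u)\|$ intact and finish with the triangle inequality where the paper expands the square, and you bypass the auxiliary mapping $S_{j,\beta_n}$), so no further comment is needed.
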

\begin{proof}
From the definition of $C_{n}$, we have $C_{n+1}\subset C_n$. Moreover, $x_{n+1}=P_{C_{n+1}}(x_0)\in C_{n+1}$. Thus $x_{n+1}\in C_n$. From $x_n=P_{C_n}x_0$ and the definition of $P_{C_n}$, we obtain 
$$
||x_{n}-x_0||\le ||x_{n+1}-x_0||.
$$
This implies that the sequence $\left\{||x_{n}-x_0||\right\}$ is nondecreasing. From $x_n=P_{C_n}x_0$, we also have $||x_n-x_0||\le ||u-x_0||$ for each 
$u\in F\subset C_n$. Thus, the sequence $\left\{||x_{n}-x_0||\right\}$ is bounded. Hence, there exists the limit of the sequence $\left\{||x_{n}-x_0||\right\}$. 
For all $m\ge n\ge 0$, we have $x_m\in C_n$. By $x_n=P_{C_n}x_0$ and the property $(\ref{eq:ProperOfPC})$ of the metric projection, we get
$$
||x_{m}-x_n||^2\le ||x_{m}-x_0||^2-||x_n-x_0||^2.
$$
Letting $m,n\to \infty$ in the last inequality, we obtain 
\begin{equation}\label{eq:2}
\lim_{m,n\to\infty}||x_{m}-x_n||=0.
\end{equation}
Therefore, $\left\{x_n\right\}$ is a Cauchy sequence. 
From $x_{n+1}\in C_{n+1}$ and the definition of $C_{n+1}$ we have 
\begin{equation}\label{eq:4}
||\bar{z}_n-x_{n+1}||^2\le ||x_n-x_{n+1}||^2+\epsilon_n.
\end{equation}
From the boundedness of $\left\{x_n\right\}$ and $k_n\to 1$, one has
\begin{equation}\label{eq:5}
\epsilon_n=(k_n-1)(||x_n||+\omega)^2\to 0
\end{equation}
as $n\to\infty$. Combining $(\ref{eq:2}),(\ref{eq:4}),(\ref{eq:5})$ we get
\begin{equation}\label{eq:6}
\lim_{n\to\infty}||\bar{z}_n-x_{n+1}||=0.
\end{equation}
From $(\ref{eq:2}),(\ref{eq:6})$ and $||\bar{z}_n-x_n||\le||\bar{z}_n-x_{n+1}||+||x_{n+1}-x_n||$ we get
\begin{equation}\label{eq:7}
\lim_{n\to\infty}||\bar{z}_n-x_{n}||=0.
\end{equation}
From the definition of $j_n$, we see that
\begin{equation}\label{eq:8}
\lim_{n\to\infty}||z_n^j-x_{n}||=0
\end{equation}
for all $j=1,\ldots,M$. 
From $z_n^j=\alpha_n x_n+(1-\alpha_n)S_{j,\beta_n}\bar{y}_n$ we obtain
$$ ||z_n^j-x_n||=(1-\alpha_n)||S_{j,\beta_n}\bar{y}_n-x_n||. $$
This equality together $(\ref{eq:8})$ and $\lim_{n\to\infty}\sup\alpha_n<1$ implies that
\begin{equation}\label{eq:10}
\lim_{n\to\infty}||S_{j,\beta_n}\bar{y}_n-x_n||=0.
\end{equation} 
For each $u\in F$, from the firmly nonexpansiveness of $T_{r_n}^{f_{i_n}}$, we have
\begin{eqnarray*}
2||\bar{y}_n-u||^2&=&2||T_{r_n}^{f_{i_n}}(x_n-r_n A_{i_n}x_n)-T_{r_n}^{f_{i_n}}(u-r_n A_{i_n}u)||^2\\
&\le&2\left\langle (x_n-r_n A_{i_n}x_n)-(u-r_n A_{i_n}u),\bar{y}_n-u\right\rangle\\
&=&||(x_n-r_n A_{i_n}x_n)-(u-r_n A_{i_n}u)||^2+||\bar{y}_n-u||^2\\
&&-||(x_n-r_n A_{i_n}x_n)-(u-r_n A_{i_n}u)-(\bar{y}_n-u)||^2\\
&\le&||x_n-u||^2+||\bar{y}_n-u||^2-||(x_n-\bar{y}_n)-r_n (A_{i_n}x_n-A_{i_n}u)||^2\\
&\le&||x_n-u||^2+||\bar{y}_n-u||^2-||x_n-\bar{y}_n||^2-r_n^2||A_{i_n}x_n-A_{i_n}u||^2\\
&&+2r_n\left\langle A_{i_n}x_n-A_{i_n}u,x_n-\bar{y}_n\right\rangle.
\end{eqnarray*}
Therefore
\begin{eqnarray}
||\bar{y}_n-u||^2&\le &||x_n-u||^2-||x_n-\bar{y}_n||^2-r_n^2||A_{i_n}x_n-A_{i_n}u||^2\nonumber\\
&&+2r_n\left\langle A_{i_n}x_n-A_{i_n}u,x_n-\bar{y}_n\right\rangle\nonumber\\
&\le&||x_n-u||^2-||x_n-\bar{y}_n||^2-r_n^2||A_{i_n}x_n-A_{i_n}u||^2\nonumber\\
&&+2r_n||A_{i_n}x_n-A_{i_n}u||||x_n-\bar{y}_n||\nonumber\\
&\le&||x_n-u||^2-||x_n-\bar{y}_n||^2+2r_n||A_{i_n}x_n-A_{i_n}u||||x_n-\bar{y}_n||.\label{eq:12}
\end{eqnarray}
By arguing similarly as in $(\ref{eq:2*})$, we obtain
\begin{equation}\label{eq:12*}
||\bar{z}_n-u||^2\le\alpha_n||x_n-u||^2+\epsilon_n+(1-\alpha_n)||\bar{y}_n-u||^2.
\end{equation}
The last inequality together with the relation $(\ref{eq:1*})$ one has
\begin{eqnarray*}
||\bar{z}_n-u||^2&\le&\alpha_n||x_n-u||^2+\epsilon_n+(1-\alpha_n)||\bar{y}_n-u||^2\\
&\le&\alpha_n||x_n-u||^2+\epsilon_n+(1-\alpha_n)||x_n-u||^2\\
&&-(1-\alpha_n)r_n(2\alpha-r_n)||A_{i_n}x_n-A_{i_n}u||^2\\ 
&=& ||x_n-u||^2+\epsilon_n-(1-\alpha_n)r_n(2\alpha-r_n)||A_{i_n}x_n-A_{i_n}u||^2.
\end{eqnarray*}
Therefore 
$$(1-\alpha_n)r_n(2\alpha-r_n)||A_{i_n}x_n-A_{i_n}u||^2\le||x_n-u||^2-||\bar{z}_n-u||^2+\epsilon_n.$$
This implies that
\begin{equation}\label{eq:13}
(1-\alpha_n)r_n(2\alpha-r_n)||A_{i_n}x_n-A_{i_n}u||^2\le||x_n-\bar{z}_n||\left(||x_n-u||+||\bar{z}_n-u||\right)+\epsilon_n.
\end{equation}
From $(\ref{eq:5}),(\ref{eq:7}),(\ref{eq:13})$ and the boundedness of $\left\{x_n\right\},\left\{\bar{z}_n\right\}$ we obtain
\begin{equation}\label{eq:14}
\lim_{n\to\infty}||A_{i_n}x_n-A_{i_n}u||=0.
\end{equation}
From $(\ref{eq:12})$ and $(\ref{eq:12*})$, we get
\begin{eqnarray}
(1-\alpha_n)||x_n-\bar{y}_n||^2&\le&||x_n-u||^2-||\bar{z}_n-u||^2+\epsilon_n\nonumber\\
&&+(1-\alpha_n)2r_n||A_{i_n}x_n-A_{i_n}u||||x_n-\bar{y}_n||\nonumber\\
&\le&||x_n-\bar{z}_n||\left(||x_n-u||+||\bar{z}_n-u||\right)+\epsilon_n\nonumber\\
&&+(1-\alpha_n)2r_n||A_{i_n}x_n-A_{i_n}u||||x_n-\bar{y}_n||.\label{eq:15}
\end{eqnarray}
Combining $(\ref{eq:5}),(\ref{eq:7}),(\ref{eq:14}),(\ref{eq:15})$, we have
\begin{equation}\label{eq:16}
\lim_{n\to\infty}||x_n-\bar{y}_n||=0.
\end{equation}
From the definition of $i_n$, we conclude that
$$
\lim_{n\to\infty}||x_n-y_n^i||=0,\quad i=1,\ldots,N,
$$
From $(\ref{eq:10})$ and $(\ref{eq:16})$, one has
$$
\lim_{n\to\infty}||S_{j,\beta_n}\bar{y}_n-\bar{y}_n||=0
$$
or
$$
\lim_{n\to\infty}(1-\beta_n)||S_{j}^n\bar{y}_n-\bar{y}_n||=0.
$$
Since $\beta_n\le b<1$, 
\begin{equation}\label{eq:19}
\lim_{n\to\infty}||S_{j}^n\bar{y}_n-\bar{y}_n||=0.
\end{equation}
Thus, it follows from the uniformly $L$ - Lipschitz continuity of $S_j$ that
\begin{eqnarray*}
||S_j^nx_n-x_n||&\le&||S_j^nx_n-S_j^n\bar{y}_n||+||S_j^n\bar{y}_n-\bar{y}_n||+||\bar{y}_n-x_n||\\ 
&\le& L||x_n-\bar{y}_n||+||S_j^n\bar{y}_n-\bar{y}_n||+||\bar{y}_n-x_n||\\ 
&=&(L+1)||x_n-\bar{y}_n||+||S_j^n\bar{y}_n-\bar{y}_n||,
\end{eqnarray*}
where $L$ is defined as in Lemma \ref{lem.demiclose}. This together with $(\ref{eq:16})$ and $(\ref{eq:19})$ implies that
\begin{equation}\label{eq:20}
\lim_{n\to\infty}||S_{j}^nx_n-x_n||=0.
\end{equation}
From the triangle inequality and the uniformly $L$ - Lipschitz continuity of $S_j$,
\begin{eqnarray*}
||S_{j}x_n-x_n||&\le&||S_{j}x_n-S_j^{n+1}x_n||+||S_{j}^{n+1}x_n-S_j^{n+1}x_{n+1}||\\
&&+||S_{j}^{n+1}x_{n+1}-x_{n+1}||+||x_{n+1}-x_n||\\ 
&=&L||x_n-S_j^n x_n|| +(L+1)||x_{n}-x_{n+1}||+||S_{j}^{n+1}x_{n+1}-x_{n+1}||,
\end{eqnarray*}
which, from the relations (\ref{eq:2}) and $(\ref{eq:20})$, implies that
\begin{equation}\label{eq:21}
\lim_{n\to\infty}||S_{j}x_n-x_n||=0.
\end{equation}
The proof of Lemma $\ref{lem.limits}$ is complete.
\end{proof}
\begin{lemma}\label{lem.pinF}
Suppose that $p$ is a limit point of $\left\{x_n\right\}$ then $p\in F$.
\end{lemma}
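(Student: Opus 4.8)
The plan is first to upgrade the hypothesis: by Lemma~\ref{lem.limits} the sequence $\left\{x_n\right\}$ is Cauchy, hence convergent, so it has a unique limit point, which must equal $p$; thus $x_n\to p$. Since $x_n=P_{C_n}x_0\in C_n\subset C$ and $C$ is closed, $p\in C$. Moreover, the estimates in Lemma~\ref{lem.limits} give $y_n^i\to p$ and $z_n^j\to p$ for every $i,j$, because $||y_n^i-p||\le ||y_n^i-x_n||+||x_n-p||\to 0$ and likewise for $z_n^j$.

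Next I would treat the fixed point part. Lemma~\ref{lem.limits} gives $||x_n-S_jx_n||\to 0$, i.e.\ $(I-S_j)x_n\to 0$, for each $j=1,\ldots,M$. Since $x_n\to p$ implies $x_n\rightharpoonup p$, the demiclosedness of $I-S_j$ at $0$ (Lemma~\ref{lem.demiclose}, item ii) yields $(I-S_j)p=0$, that is $p\in F(S_j)$. Hence $p\in\cap_{j=1}^M F(S_j)$.

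The core of the proof is to show $p\in GEP(f_i,A_i)$ for each $i=1,\ldots,N$. Fix $i$. Since $y_n^i=T_{r_n}^{f_i}\left(x_n-r_nA_ix_n\right)$, the defining inequality of the resolvent gives, for all $y\in C$,
$$ f_i(y_n^i,y)+\frac{1}{r_n}\left\langle y-y_n^i,y_n^i-x_n\right\rangle+\left\langle y-y_n^i,A_ix_n\right\rangle\ge 0. $$
Using the monotonicity $(A2)$ of $f_i$ we deduce $f_i(y,y_n^i)\le \frac{1}{r_n}\left\langle y-y_n^i,y_n^i-x_n\right\rangle+\left\langle y-y_n^i,A_ix_n\right\rangle$. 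Now let $n\to\infty$: since $||y_n^i-x_n||\to 0$ and $r_n\ge d>0$, the first term on the right tends to $0$; since $A_i$ is $1/\alpha$ - Lipschitz (Remark~\ref{remark3}) and $x_n\to p$, $y_n^i\to p$, the second term tends to $\left\langle y-p,A_ip\right\rangle$; and by the lower semicontinuity of $f_i(y,\cdot)$ in $(A4)$, $f_i(y,p)\le\liminf_n f_i(y,y_n^i)$. Therefore $f_i(y,p)\le\left\langle A_ip,y-p\right\rangle$ for all $y\in C$. To recover the GEP inequality I would use the standard device: given $y\in C$ and $t\in(0,1]$, put $y_t=ty+(1-t)p\in C$; by $(A1)$ and the convexity of $f_i(y_t,\cdot)$ in $(A4)$,
$$ 0=f_i(y_t,y_t)\le tf_i(y_t,y)+(1-t)f_i(y_t,p)\le tf_i(y_t,y)+(1-t)t\left\langle A_ip,y-p\right\rangle, $$
so $0\le f_i(y_t,y)+(1-t)\left\langle A_ip,y-p\right\rangle$; letting $t\to 0^+$ and invoking $(A3)$ gives $0\le f_i(p,y)+\left\langle A_ip,y-p\right\rangle$ for all $y\in C$, i.e.\ $p\in GEP(f_i,A_i)$.

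Combining the two parts, $p\in\left(\cap_{i=1}^N GEP(f_i,A_i)\right)\cap\left(\cap_{j=1}^M F(S_j)\right)=F$, which completes the proof. I expect the GEP step to be the main obstacle: correctly passing to the limit in the resolvent inequality (using $(A2)$ to move the $\liminf$ past the first argument of $f_i$) and then reconstructing the GEP inequality through the $(A1)$--$(A4)$ convex-combination argument require care, whereas the fixed point part is immediate once demiclosedness is in hand.
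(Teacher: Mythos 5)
Your proof is correct and follows essentially the same route as the paper's: strong (Cauchy) convergence of $\{x_n\}$ to $p\in C$, the fixed-point part from $\|x_n-S_jx_n\|\to 0$, and the GEP part via the resolvent inequality, the monotonicity condition $(A2)$, and the convex-combination ($y_t$) device using $(A1)$, $(A3)$, $(A4)$. The only differences are minor: you invoke demiclosedness of $I-S_j$ where the paper simply uses the Lipschitz continuity of $S_j$, and in the GEP step you pass to the limit in $\left\langle A_ix_n,y-y_n^i\right\rangle$ directly (legitimate here because $x_n\to p$ strongly and $A_i$ is Lipschitz), whereas the paper first replaces $A_ix_n$ by $A_iy_t$ via the monotonicity of $A_i$ --- a detour that is only really needed when the convergence is merely weak.
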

\begin{proof}
By Lemma $\ref{lem.limits}$, $\left\{x_n\right\}$ is a Cauchy sequence in $C$. Since $C$ is closed, $x_n\to p\in C$. From Lemma $\ref{lem.demiclose}$ we see that $S_j$ is Lipschitz continuous and so continuous. Thus, the equality $(\ref{eq:21})$ gives $S_j p=p$. Hence $p\in F(S_j)$ for all $j=1,\ldots,M$.

Next, we show that $p\in \cap_{i=1}^N GEP(f_i,A_i)$. Note that $\lim_{n\to\infty}\left\|y_n^i-x_n\right\|=0$. This together with $r_n\ge d>0$ implies that
\begin{equation}\label{eq:22}
\lim_{n\to\infty}\frac{\left\|y_n^i-x_n\right\|}{r_n}=0.
\end{equation}
Moreover, since $A_i$ is $\alpha$-inverse strongly monotone, $A_i$ is Lipschitz continuous. Hence 
\begin{equation}\label{eq:22*}
\lim_{n\to\infty}||A_i y_n^i-A_ix_n ||=0.
\end{equation}
We have $y_n^i=T_{r_n}^{f_i}(x_n-r_n A_i x_n)$, i.e.,
\begin{equation}\label{eq:23}
f_i(y_n^i,y)+\left\langle A_i x_n, y-y_n^i\right\rangle+\frac{1}{r_n}\left\langle y-y_n^i,y_n^i-x_n\right\rangle \ge 0, \forall y\in C.
\end{equation}
From $(\ref{eq:23})$ and $(A2)$, we get
\begin{equation}\label{eq:24}
\left\langle A_i x_n, y-y_n^i\right\rangle+\frac{1}{r_n}\left\langle y-y_n^i,y_n^i-x_n\right\rangle \ge -f_i(y_n^i,y)\ge f_i(y,y_n^i)\quad \forall y\in C.
\end{equation}
For $0<t\le 1$ and $y\in C$, putting $y_t=ty+(1-t)p$. Since $y\in C$ and $p\in C$, $y_t \in C$. Hence, for each $t\in (0,1]$, from $(A3)$ and $(\ref{eq:24})$, we have that
\begin{eqnarray*}
\left\langle y_t-y_n^i, A_i y_t\right\rangle &\ge&\left\langle y_t-y_n^i, A_i y_t\right\rangle-\left\langle A_i x_n, y_t-y_n^i\right\rangle\\
&&-\frac{1}{r_n}\left\langle y_t-y_n^i,y_n^i-x_n\right\rangle+f_i(y_t,y_n^i)\\ 
&\ge& \left\langle y_t-y_n^i, A_i y_t-A_i y_n^i\right\rangle+\left\langle A_i y_n^i-A_i x_n, y_t-y_n^i\right\rangle\\
&&-\frac{1}{r_n}\left\langle y_t-y_n^i,y_n^i-x_n\right\rangle+f_i(y_t,y_n^i)\\ 
&\ge&\left\langle A_i y_n^i-A_i x_n, y_t-y_n^i\right\rangle-\frac{1}{r_n}\left\langle y_t-y_n^i,y_n^i-x_n\right\rangle+f_i(y_t,y_n^i).
\end{eqnarray*}
Letting $n\to\infty$ in the last inequality, from $(\ref{eq:22}),(\ref{eq:22*})$ and the hypothesis $(A4)$, we have
\begin{equation}\label{eq:25}
\left\langle y_t-p, A_i y_t\right\rangle \ge f_i(y_t,p).
\end{equation}
By $(A1),(A4)$ and $(\ref{eq:25})$, one has
\begin{eqnarray*}
0&=&f_i(y_t,y_t)\\ 
&=&f_i(y_t,ty+(1-t)p) \\
&\le& tf_i(y_t,y)+(1-t)f_i(y_t,p)\\
&\le& tf_i(y_t,y)+(1-t)\left\langle y_t-p, A_i y_t\right\rangle\\
&=&tf_i(y_t,y)+(1-t)t\left\langle y-p, A_i y_t\right\rangle.
\end{eqnarray*}
Dividing both sides of the last inequality by $t>0$, we obtain 
$$ f_i(y_t,y)+(1-t)\left\langle y-p, A_i y_t\right\rangle\ge 0,\, \forall y\in C. $$
Taking $t\to 0^+$ in the last inequality, from $(A3)$, we get $f_i(p,y)+\left\langle y-p, A_i p\right\rangle\ge 0$ for all $y\in C$ and $1\le i\le N$, i.e, $p\in \cap_{i=1}^N GEP(f_i,A_i)$. Therefore, $p\in F$. The proof of Lemma $\ref{lem.pinF}$ is complete.
\end{proof}
\begin{theorem}\label{theo1} Let $C$ be a nonempty closed convex subset of a Hilbert space $H$. Suppose that $\left\{f_i\right\}^N_{i=1}$ 
is a finite family of bifunctions satisfying the conditions $(A1)-(A4)$; $\left\{A_i\right\}^N_{i=1}$ is a finite family of $\alpha$ - inverse strongly 
monotone mappings; $\left\{S_j\right\}^M_{j=1}$ is a finite family of asymptotically $\kappa$ - strictly pseudocontractive mappings with the 
sequence $\left\{k_n\right\}\subset [1;+\infty),k_n\to 1$. Moreover, suppose that the solution set $F$ is nonempty and bounded. Then the sequences 
$\left\{x_n\right\},\left\{y_n^i\right\}$ and $\left\{z_n^j\right\}$ generated by Algorithm $\ref{Algor.1}$ converge strongly to $P_F x_0$.
\end{theorem}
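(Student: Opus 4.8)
The plan is to assemble the three preceding lemmas; the theorem is essentially their corollary. By Lemma~\ref{lem.well-posed} the algorithm never breaks down and $F\subset C_n$ for every $n\ge 0$, so each $x_n=P_{C_n}x_0$ is meaningful. By Lemma~\ref{lem.limits} the sequence $\{x_n\}$ is Cauchy in $C$; since $C$ is closed, $x_n\to p$ for some $p\in C$. The same lemma gives $\|x_n-y_n^i\|\to 0$ and $\|x_n-z_n^j\|\to 0$ for all $i=1,\ldots,N$ and $j=1,\ldots,M$, whence $y_n^i\to p$ and $z_n^j\to p$ as well. Thus the only remaining task is to identify the common limit $p$.

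First I would invoke Lemma~\ref{lem.pinF}: every limit point of $\{x_n\}$ belongs to $F$, so in particular $p\in F$. Note that $F$ is nonempty, closed and convex (nonemptiness is assumed; closedness and convexity follow from Lemmas~\ref{lem.demiclose} and~\ref{Tr-ClosedConvex}, as already observed in the proof of Lemma~\ref{lem.well-posed}), so the metric projection $P_F$ onto $F$ is well-defined and $P_F x_0\in F\subset C_n$ for every $n$.

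Next comes the standard optimality argument. Since $x_n=P_{C_n}x_0$ is the point of $C_n$ closest to $x_0$ and $P_F x_0\in C_n$, we have $\|x_n-x_0\|\le\|P_F x_0-x_0\|$ for all $n$. Letting $n\to\infty$ yields $\|p-x_0\|\le\|P_F x_0-x_0\|$. But $p\in F$, and $P_F x_0$ is the \emph{unique} minimizer of $v\mapsto\|v-x_0\|$ over $v\in F$; hence $p=P_F x_0$. Combining this with the first paragraph, the sequences $\{x_n\}$, $\{y_n^i\}$ and $\{z_n^j\}$ all converge strongly to $P_F x_0$, which proves the theorem.

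As for where the effort lies: the theorem itself is short, and the single point needing a little care is the projection-characterization step, which is routine once one records $P_F x_0\in C_n$. The genuine difficulty is already discharged in the lemmas — in Lemma~\ref{lem.limits}, the monotone outer-approximation estimate $\|x_m-x_n\|^2\le\|x_m-x_0\|^2-\|x_n-x_0\|^2$ together with the bookkeeping of the error term $\epsilon_n=(k_n-1)(\|x_n\|+\omega)^2\to 0$ (which is exactly where boundedness of $F$, via $\omega$, is used), and in Lemma~\ref{lem.pinF}, the demiclosedness of $I-S_j$ from Lemma~\ref{lem.demiclose} and the Minty-type passage to the limit in the equilibrium inequality using $(A1)$–$(A4)$ and the $\alpha$-inverse strong monotonicity of the $A_i$.
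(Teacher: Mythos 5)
Your proposal is correct and follows essentially the same route as the paper: invoke Lemma~\ref{lem.limits} for the Cauchy property and the vanishing gaps $\|x_n-y_n^i\|,\|x_n-z_n^j\|\to 0$, Lemma~\ref{lem.pinF} to place the limit $p$ in $F$, and then the standard minimality argument $\|x_n-x_0\|\le\|P_Fx_0-x_0\|$ (valid since $P_Fx_0\in F\subset C_n$) to conclude $p=P_Fx_0$. No gaps.
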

\begin{proof}
By Lemmas $\ref{lem.limits}$ and $\ref{lem.pinF}$, the sequences $\left\{x_n\right\},\left\{y_n^i\right\}$ and $\left\{z_n^j\right\}$ 
converge strongly to $p\in F$. Now, we show that $x_n\to x^\dagger:=P_{F}x_0$. Indeed, from the proof of Lemma $\ref{lem.limits}$ and $x^\dagger \in F$ we get
$$
\left\|x_n-x_0\right\|\le \left\|x^\dagger-x_0\right\|.
$$
By the continuity of $\left\|.\right\|$ we have
$$
\left\|p-x_0\right\|=\lim_{n\to\infty}\left\|x_n-x_0\right\|\le \left\|x^\dagger-x_0\right\|.
$$
By the definition of $x^\dagger$, $p=x^\dagger$. The proof of Theorem $\ref{theo1}$ is complete.
\end{proof}
\begin{corollary}\label{cor1}
Assume that $\left\{f_i\right\}_{i=1}^N,\left\{A_k\right\}_{k=1}^K,\left\{S_j\right\}_{j=1}^M, \left\{\alpha_n\right\},\left\{\beta_n\right\},\left\{r_n\right\}$ 
satisfy all conditions in Theorem $\ref{theo1}$. In addition the set 
$F$ $=$ $\left(\cap_{i=1}^N EP(f_i,C)\right)$ $\bigcap$ $\left(\cap_{k=1}^K VI(A_k,C)\right)$ $\bigcap$ $\left(\cap_{j=1}^M F(S_j)\right)$ is nonempty 
and bounded. Let $\left\{x_n\right\}$ be the sequence generated by the following manner:
$$
\left\{
\begin{array}{ll}
&x_0\in C_0:=C,\\
&y_{n}^i=T_{r_n}^{f_i}x_n, i=1,\ldots, N,\\
&y_{n}^{N+k}=P_C(x_n-r_n A_kx_n), k=1,\ldots, K,\\
&i_n=\arg\max\left\{||y_{n}^i-x_n||:i=1,\ldots, N+K\right\}, \bar{y}_n=y_n^{i_n},\\
&z_n^j=\alpha_n x_n+(1-\alpha_n)\left(\beta_n\bar{y}_n+(1-\beta_n) S_j^n \bar{y}_n\right), j=1,\ldots,M,\\
&j_n= {\rm argmax}\{||z_n^j - x_n||: j =1,\ldots,M\},\bar{z}_n:=z^{j_n}_n,\\
&C_{n+1} = \{v \in C_n: ||\bar{z}_n - v||^2\leq ||x_n-v||^2+\epsilon_n\},\\
&x_{n+1}= P_{C_{n+1}}(x_0), n\ge 0.
\end{array}
\right.
$$
Then the sequence $\left\{x_n\right\}$ converges strongly to $P_Fx_0$.
\end{corollary}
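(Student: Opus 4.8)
\section*{Proof proposal for Corollary~\ref{cor1}}

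The proof is by reduction to Theorem~\ref{theo1}: the plan is to realize the iteration of Corollary~\ref{cor1} as Algorithm~\ref{Algor.1} run on an enlarged family of generalized equilibrium problems, and then to quote Theorem~\ref{theo1} directly. The point is that an equilibrium problem is a GEP with zero operator and a variational inequality is a GEP with zero bifunction.

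First I would record three elementary facts. (i) The zero bifunction $f\equiv 0$ on $C\times C$ satisfies $(A1)$--$(A4)$: $(A1)$, $(A2)$ and $(A3)$ reduce to $0=0$ or $0\le 0$, while $(A4)$ holds since a constant is convex and continuous. (ii) The zero operator $A\equiv 0$ is $\beta$-inverse strongly monotone for every $\beta>0$ (indeed $\langle 0,x-y\rangle=0\ge\beta\|0\|^2$); in particular it is $\alpha$-inverse strongly monotone for the common constant $\alpha$ of the family $\{A_k\}_{k=1}^K$, so condition $(c)$, $0<d\le r_n\le e<2\alpha$, is unaffected. (iii) For any $r>0$ and $x\in H$, the mapping $T_r^0$ of Lemma~\ref{Tr-ClosedConvex} is exactly $P_C$: by definition $T_r^0x=\{z\in C:\frac1r\langle y-z,z-x\rangle\ge0\ \forall y\in C\}=\{z\in C:\langle x-z,z-y\rangle\ge0\ \forall y\in C\}$, which by $(\ref{eq:EquivalentPC})$ is the singleton $\{P_Cx\}$; moreover $GEP(0,A)$ is literally problem $(\ref{eq:VIP})$, i.e. $GEP(0,A_k)=VI(A_k,C)$.

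Now I would set $(g_\ell,B_\ell):=(f_\ell,0)$ for $\ell=1,\dots,N$ and $(g_{N+k},B_{N+k}):=(0,A_k)$ for $k=1,\dots,K$. By (i)--(ii) each $g_\ell$ satisfies $(A1)$--$(A4)$ and each $B_\ell$ is $\alpha$-inverse strongly monotone, and by (iii)
$$
\bigcap_{\ell=1}^{N+K}GEP(g_\ell,B_\ell)=\Big(\bigcap_{i=1}^N EP(f_i,C)\Big)\bigcap\Big(\bigcap_{k=1}^K VI(A_k,C)\Big),
$$
so the set $F$ of Corollary~\ref{cor1} equals $\big(\bigcap_{\ell=1}^{N+K}GEP(g_\ell,B_\ell)\big)\cap\big(\bigcap_{j=1}^M F(S_j)\big)$, which is precisely the solution set of Theorem~\ref{theo1} (with $N$ there replaced by $N+K$), and it is nonempty and bounded by assumption. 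Running Algorithm~\ref{Algor.1} on $\{(g_\ell,B_\ell)\}_{\ell=1}^{N+K}$ and $\{S_j\}_{j=1}^M$, Step~1 gives $y_n^\ell=T_{r_n}^{f_\ell}(x_n-r_n\cdot 0)=T_{r_n}^{f_\ell}x_n$ for $\ell\le N$ and $y_n^{N+k}=T_{r_n}^{0}(x_n-r_nA_kx_n)=P_C(x_n-r_nA_kx_n)$ for $k\le K$, i.e. the first two lines of the scheme in Corollary~\ref{cor1}; Steps~2--7 are then verbatim the remaining lines, with the same $\epsilon_n$ since the bound $\omega$ for $F$ is unchanged. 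Hence Theorem~\ref{theo1} applies and yields the strong convergence of $\{x_n\}$, $\{y_n^\ell\}$ and $\{z_n^j\}$ to $P_Fx_0$.

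There is no analytic difficulty; the only step requiring attention is fact (iii) --- checking that feeding the degenerate data $(0,A_k)$ into $(\ref{eq:GEP})$ and into Lemma~\ref{Tr-ClosedConvex} genuinely reproduces the projection step $P_C(x_n-r_nA_kx_n)$ --- and this is immediate from the characterization $(\ref{eq:EquivalentPC})$ of the metric projection. The remainder is bookkeeping: conditions $(a)$--$(c)$ of Algorithm~\ref{Algor.1} involve only $\kappa$, $\{k_n\}$ and the common constant $\alpha$, none of which changes when zero operators and zero bifunctions are adjoined.
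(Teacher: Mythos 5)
Your proposal is correct and follows essentially the same route as the paper: the paper likewise defines the enlarged family $(F_i,B_i)$ with $F_i=f_i,\ B_i=0$ for $i\le N$ and $F_{N+k}=0,\ B_{N+k}=A_k$, and invokes Theorem~\ref{theo1}. Your write-up is in fact more careful than the paper's, since you explicitly verify that the zero bifunction satisfies $(A1)$--$(A4)$, that the zero operator is $\alpha$-inverse strongly monotone, and that $T_r^0=P_C$ via $(\ref{eq:EquivalentPC})$ --- details the paper takes for granted.
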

\begin{proof}
Putting $F_i=f_i$ for $i=1,\ldots, N$ and $F_i=0$ for $i=N+1,\ldots, N+K$. Putting $B_i=0$ for $i=1,\ldots, N$ and $B_{N+k}=A_k$ for $k=1,\ldots,K$. 
Now, we condider the following $N+K$ generalized equilibrium problems
$$
\mbox{Find}\, x\in C\, \mbox{such that:}\, F_i(x,y) +\left\langle B_ix,y-x \right\rangle \ge 0,\quad \forall y\in C,
$$
where $i=1,\ldots,N+K$. Application of Algorithm $\ref{Algor.1}$ with the finite family of above GEPs leads to
$$
y_n^i=T_{r_n}^{F_i}(x_n-r_n B_ix_n),\,i=1,\ldots,N+K.
$$
For each $i=1,\ldots,N$, since $B_i=0$ and $F_i=f_i$, $y_n^i=T_{r_n}^{f_i}(x_n)$. Moreover, for each $i=N+1,N+2,\ldots,N+K$, since $F_i=0$ and $B_i=A_i$, $y_n^i=P_C(x_n-r_n A_ix_n).$ Thus, Corollary $\ref{cor1}$ is followed directly from Theorem $\ref{theo1}$.
\end{proof}
\begin{corollary}\label{cor2}
Assume that $\left\{A_k\right\}_{i=1}^N,\left\{S_j\right\}_{j=1}^M, \left\{\alpha_n\right\},\left\{\beta_n\right\},\left\{r_n\right\}$ satisfy all conditions in 
Theorem $\ref{theo1}$. In addition the solution set $F$ $=$ $\left(\cap_{i=1}^N VI(A_i,C)\right)$ $\bigcap$ $\left(\cap_{j=1}^M F(S_j)\right)$ is 
nonempty and bounded. Let $\left\{x_n\right\}$ be the sequence generated by the following manner:
$$
\left\{
\begin{array}{ll}
&x_0\in C_0:=C,\\
&y_{n}^{i}=P_C(x_n-r_n A_ix_n), i=1,\ldots, N,\\
&i_n=\arg\max\left\{||y_{n}^i-x_n||:i=1,\ldots, N\right\}, \bar{y}_n=y_n^{i_n},\\
&z_n^j=\alpha_n x_n+(1-\alpha_n)\left(\beta_n\bar{y}_n+(1-\beta_n) S_j^n \bar{y}_n\right), j=1,\ldots,M,\\
&j_n= {\rm argmax}\{||z_n^j - x_n||: j =1,\ldots,M\},\bar{z}_n:=z^{j_n}_n,\\
&C_{n+1} = \{v \in C_n: ||\bar{z}_n - v||^2\leq ||x_n-v||^2+\epsilon_n\},\\
&x_{n+1}= P_{C_{n+1}}(x_0), n\ge 0.
\end{array}
\right.
$$
Then the sequence $\left\{x_n\right\}$ converges strongly to $P_Fx_0$.
\end{corollary}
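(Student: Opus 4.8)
The plan is to obtain Corollary~\ref{cor2} as a direct specialization of Theorem~\ref{theo1}, choosing the equilibrium bifunctions to be identically zero. Concretely, I would set $f_i \equiv 0$ on $C \times C$ for every $i = 1,\ldots,N$ and apply Algorithm~\ref{Algor.1} and Theorem~\ref{theo1} to the family $\{f_i\}_{i=1}^N$ together with the given families $\{A_i\}_{i=1}^N$ of $\alpha$-inverse strongly monotone mappings and $\{S_j\}_{j=1}^M$ of asymptotically $\kappa$-strictly pseudocontractive mappings.

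First I would check that the zero bifunction satisfies the standing hypotheses $(A1)$–$(A4)$: this is immediate, since $f_i(x,x)=0$, $f_i(x,y)+f_i(y,x)=0\le 0$, $\limsup_{t\to0}f_i(tz+(1-t)x,y)=0\le 0$, and $f_i(x,\cdot)\equiv 0$ is convex and lower semicontinuous. Next, recalling that for $f=0$ the GEP $(\ref{eq:GEP})$ reduces to the variational inequality $(\ref{eq:VIP})$, we have $GEP(f_i,A_i)=VI(A_i,C)$, so the solution set $F$ appearing in Theorem~\ref{theo1} coincides with $\left(\cap_{i=1}^N VI(A_i,C)\right)\bigcap\left(\cap_{j=1}^M F(S_j)\right)$, and its nonemptiness and boundedness is exactly the hypothesis of the corollary.

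The one computation worth doing carefully is the identification of the resolvent $T_{r_n}^{0}$ with the metric projection $P_C$: by Lemma~\ref{Tr-ClosedConvex}, $z=T_r^0 x$ is the unique point of $C$ with $\langle y-z,z-x\rangle\ge 0$ for all $y\in C$, i.e.\ $\langle x-z,z-y\rangle\ge 0$ for all $y\in C$, which by $(\ref{eq:EquivalentPC})$ characterizes $z=P_C x$. Hence Step~1 of Algorithm~\ref{Algor.1} for this data reads $y_n^i=T_{r_n}^{0}(x_n-r_nA_ix_n)=P_C(x_n-r_nA_ix_n)$, $i=1,\ldots,N$, while Steps~2–7 are verbatim those written in the statement of Corollary~\ref{cor2}. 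Therefore the sequence $\{x_n\}$ generated there is exactly the sequence produced by Algorithm~\ref{Algor.1} applied to this data, and Theorem~\ref{theo1} yields $x_n\to P_F x_0$. There is no genuine obstacle here; the only points requiring attention are the routine verification of $(A1)$–$(A4)$ and the resolvent–projection identity just noted. (One could equally deduce the corollary from Corollary~\ref{cor1} by letting the index $N$ there be zero, but the route through Theorem~\ref{theo1} is the most transparent.)
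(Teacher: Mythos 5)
Your proposal is correct and follows the paper's own route exactly: the paper proves Corollary~\ref{cor2} by applying Theorem~\ref{theo1} with $f_i(x,y)=0$ for all $i$. Your additional verification that $T_{r}^{0}=P_C$ via $(\ref{eq:EquivalentPC})$ and that the zero bifunction satisfies $(A1)$--$(A4)$ merely makes explicit what the paper leaves tacit.
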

\begin{proof}
Corollary $\ref{cor2}$ is followed from Theorem $\ref{theo1}$ with $f_i(x,y)=0$ for all $i=1,\ldots,N$.
\end{proof}
\begin{corollary}\label{cor3}
Let $f$ be a bifunction from $C\times C$ to $\Re$ satisfying all conditions $\left(A1\right)-\left(A4\right)$, $A$ be an $\alpha$- inverse strongly monotone 
mapping, and $S$ be an asymptotically $\kappa$ - strict pseudocontraction mapping. The control parameter sequences 
$\left\{\alpha_n\right\},\left\{\beta_n\right\},\left\{r_n\right\}$ satisfy all conditions in Theorem $\ref{theo1}$. Moreover, assume that the solution 
set $F=GEP(f,A)\cap F(S)$ is nonempty and bounded. Let $\left\{x_n\right\}$ be the sequence generated by the following manner
$$
\left\{
\begin{array}{ll}
&x_0\in C_0:=C,\\
&y_n\,\, {\rm such \, that}\,\,f(y_n,y)+\left\langle Ax_n, y-y_n\right\rangle+\frac{1}{r_n}\left\langle y-y_n,y_n-x_n\right\rangle \ge 0, \forall y\in C,\\
&z_n=\alpha_n x_n+(1-\alpha_n)\left(\beta_ny_n+(1-\beta_n) S^ny_n\right),\\
&C_{n+1} = \{v \in C_n: ||z_n - v||^2\leq ||x_n-v||^2+\epsilon_n\},\\
&x_{n+1}= P_{C_{n+1}}(x_0), n\ge 0.
\end{array}
\right.
$$
Then the sequence $\left\{x_n\right\}$ converges strongly to $P_Fx_0$.
\end{corollary}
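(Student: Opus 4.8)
The plan is to obtain Corollary~\ref{cor3} as the particular instance of Theorem~\ref{theo1} in which each family consists of a single member, i.e.\ $N=M=1$, with $f_1=f$, $A_1=A$ and $S_1=S$. In that situation all the ``common constant / common sequence'' normalizations made just before Algorithm~\ref{Algor.1} are vacuous, and the standing hypotheses of Corollary~\ref{cor3} are precisely those required in Theorem~\ref{theo1}: $f$ satisfies $(A1)$--$(A4)$, $A$ is $\alpha$-inverse strongly monotone, $S$ is asymptotically $\kappa$-strictly pseudocontractive with the sequence $\{k_n\}$, the parameters $\{\alpha_n\},\{\beta_n\},\{r_n\}$ obey conditions $(a)$--$(c)$, and $F=GEP(f,A)\cap F(S)$ is nonempty and bounded, so $F\subset\Omega=\{v\in H:\|v\|\le\omega\}$ for some $\omega>0$ (which is exactly the $\omega$ entering $\epsilon_n$).

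Next I would verify, line by line, that the recursion displayed in Corollary~\ref{cor3} is Algorithm~\ref{Algor.1} specialized to $N=M=1$. In Step~1 there is a single intermediate point $y_n^1=T_{r_n}^{f}\!\left(x_n-r_nAx_n\right)$; by Lemma~\ref{Tr-ClosedConvex}(B1) this point is single-valued, and by the definition of the resolvent $T_{r}^{f}$ applied at $x_n-r_nAx_n$ it is characterised by
\[
f(y_n^1,y)+\frac{1}{r_n}\left\langle y-y_n^1,\, y_n^1-(x_n-r_nAx_n)\right\rangle\ge 0,\quad \forall y\in C,
\]
which rearranges to $f(y_n^1,y)+\langle Ax_n,y-y_n^1\rangle+\frac{1}{r_n}\langle y-y_n^1,y_n^1-x_n\rangle\ge 0$, i.e.\ exactly the (implicit) condition defining $y_n$ in Corollary~\ref{cor3}; its existence is guaranteed by Lemma~\ref{ExitenceN0}. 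Since $N=1$, Step~2 forces $\bar{y}_n=y_n^1=:y_n$. With $M=1$, Step~3 produces the single point $z_n^1=\alpha_nx_n+(1-\alpha_n)\left(\beta_ny_n+(1-\beta_n)S^ny_n\right)$, Step~4 forces $\bar{z}_n=z_n^1=:z_n$, and Steps~5--6 then read off verbatim as the update rules for $C_{n+1}=\{v\in C_n:\|z_n-v\|^2\le\|x_n-v\|^2+\epsilon_n\}$ and $x_{n+1}=P_{C_{n+1}}(x_0)$, with $\epsilon_n=(k_n-1)(\|x_n\|+\omega)^2$ as before.

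Having identified the two iterations, I would simply invoke Theorem~\ref{theo1}: the sequences $\{x_n\}$, $\{y_n^1\}=\{y_n\}$ and $\{z_n^1\}=\{z_n\}$ converge strongly to $P_Fx_0$, which is the assertion of Corollary~\ref{cor3}. There is essentially no obstacle beyond the bookkeeping of this equivalence; the only step meriting a word of care is the passage from the implicit description of $y_n$ to the closed form $y_n=T_{r_n}^{f}(x_n-r_nAx_n)$ together with its well-posedness (existence and uniqueness), both of which are supplied by Lemmas~\ref{ExitenceN0} and~\ref{Tr-ClosedConvex}.
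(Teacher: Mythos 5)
Your proposal is correct and coincides with the paper's own (one-line) proof: the paper likewise obtains Corollary~\ref{cor3} as a direct specialization of Theorem~\ref{theo1}, taking $f_i=f$, $A_i=A$, $S_j=S$ for all $i,j$ (which is equivalent to your choice $N=M=1$, since the argmax over identical copies is vacuous). Your additional verification that the implicit condition defining $y_n$ is exactly the resolvent identity $y_n=T_{r_n}^{f}(x_n-r_nAx_n)$, with existence and uniqueness from Lemmas~\ref{ExitenceN0} and~\ref{Tr-ClosedConvex}, is accurate and slightly more careful than the paper's statement.
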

\begin{proof}
Corollary $\ref{cor3}$ is followed from Theorem $\ref{theo1}$ with $f_i(x,y)=f(x,y)$, $A_i=A$, $S_j=S$ for all $i=1,\ldots,N$ and $j=1,\ldots,M$.
\end{proof}
\begin{corollary}\label{cor4}
Assume that $\left\{f_i\right\}_{i=1}^N,\left\{A_i\right\}_{i=1}^N, \left\{\alpha_n\right\}, \left\{r_n\right\}$ satisfy all conditions in Theorem 
$\ref{theo1}$ and $\left\{S_j\right\}_{j=1}^M$ is a finite family of asymptotically nonexpansive mappings with the same sequence 
$\left\{k_n\right\}\subset [1,\infty), k_n \to 1$ as $n\to\infty$. Moreover, assume that the solution set 
$F=\left(\cap_{i=1}^N EP(f_i,A_i)\right)\bigcap\left(\cap_{j=1}^M F(S_j)\right)$ is nonempty and bounded. 
Let $\left\{x_n\right\}$ be the sequence generated by the following manner
$$
\left\{
\begin{array}{ll}
&x_0\in C_0:=C,\\
&y_n^i\,\, {\rm such \, that}\,\,f_i(y_n^i,y)+\left\langle Ax_n, y-y_n^i\right\rangle+\frac{1}{r_n}\left\langle y-y_n^i,y_n^i-x_n\right\rangle \ge 0, \forall y\in C,\\
&i_n=\arg\max\left\{||y_{n}^i-x_n||:i=1,\ldots, N\right\}; \bar{y}_n=y_n^{i_n},\\
&z_n^j=\alpha_n x_n+(1-\alpha_n)S_j^n \bar{y}_n, j=1,\ldots,M,\\
&j_n= {\rm argmax}\{||z_n^j - x_n||: j =1,\ldots,M\},\bar{z}_n:=z^{j_n}_n,\\
&C_{n+1} = \{v \in C_n: ||\bar{z}_n - v||^2\leq ||x_n-v||^2+\epsilon_n\},\\
&x_{n+1}= P_{C_{n+1}}(x_0), n\ge 0,
\end{array}
\right.
$$
where $\epsilon_n=(k_n^2-1)(||x_n||+\omega)^2$. Then the sequence $\left\{x_n\right\}$ converges strongly to $P_Fx_0$.
\end{corollary}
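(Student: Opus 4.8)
The plan is to obtain Corollary~\ref{cor4} as a direct specialization of Theorem~\ref{theo1}, by instantiating the family $\{S_j\}_{j=1}^M$ at $\kappa = 0$ and the control parameters $\{\beta_n\}$ at $\beta_n \equiv 0$. The first point to record is that every asymptotically nonexpansive mapping is asymptotically $0$ - strictly pseudocontractive with a slightly enlarged sequence: if $||S^n x - S^n y|| \le k_n||x-y||$ for all $x,y\in C$, then squaring gives
$$
||S^n x - S^n y||^2 \le k_n^2||x-y||^2 \le k_n^2||x-y||^2 + 0\cdot||(I-S^n)x-(I-S^n)y||^2,
$$
so $S$ is asymptotically $\kappa$ - strictly pseudocontractive with $\kappa = 0$ and sequence $\{k_n^2\}$; moreover $\{k_n^2\}\subset[1;+\infty)$ and $k_n^2\to 1$ since $k_n\to 1$. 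By Lemma~\ref{lem.demiclose} applied with $\kappa = 0$, each $F(S_j)$ is then closed and convex, $I-S_j$ is demiclosed, and each $S_j$ is uniformly Lipschitz continuous — exactly the structural facts used in the proof of Theorem~\ref{theo1}.

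Next I would verify that, under these choices, the iteration of Corollary~\ref{cor4} coincides with Algorithm~\ref{Algor.1}. Since $\kappa = 0$, condition $(b)$ of Algorithm~\ref{Algor.1} reads $0\le\beta_n\le b<1$ for some $b\in(0,1)$, so $\beta_n\equiv 0$ is admissible; with this choice $\beta_n\bar{y}_n+(1-\beta_n)S_j^n\bar{y}_n = S_j^n\bar{y}_n$, and Step~3 becomes $z_n^j = \alpha_n x_n + (1-\alpha_n)S_j^n\bar{y}_n$, which is precisely the $z_n^j$ - step in Corollary~\ref{cor4}. The step $y_n^i = T_{r_n}^{f_i}(x_n-r_nA_ix_n)$ of Algorithm~\ref{Algor.1} is, by the definition of the resolvent in Lemma~\ref{Tr-ClosedConvex}, the same as the variational characterization defining $y_n^i$ in Corollary~\ref{cor4}; the selection steps for $\bar{y}_n$ and $\bar{z}_n$, the construction of $C_{n+1}$, and the projection step are identical. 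Finally, since the asymptotic sequence is now $\{k_n^2\}$, the term $\epsilon_n = (k_n-1)(||x_n||+\omega)^2$ of Algorithm~\ref{Algor.1} becomes $\epsilon_n = (k_n^2-1)(||x_n||+\omega)^2$, matching Corollary~\ref{cor4}.

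With these identifications all hypotheses of Theorem~\ref{theo1} hold: $\{f_i\}$, $\{A_i\}$, $\{\alpha_n\}$, $\{r_n\}$ satisfy the required conditions by assumption, $\{S_j\}$ is a finite family of asymptotically $0$ - strictly pseudocontractive mappings with common sequence $\{k_n^2\}$, $k_n^2\to 1$, and the solution set $F = \left(\cap_{i=1}^N GEP(f_i,A_i)\right)\cap\left(\cap_{j=1}^M F(S_j)\right)$ is nonempty and bounded by hypothesis. Theorem~\ref{theo1} then gives that $\{x_n\}$, $\{y_n^i\}$, $\{z_n^j\}$ converge strongly to $P_F x_0$. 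There is no genuine obstacle here; the only care needed is the constant bookkeeping — checking that passing from $\{k_n\}$ to $\{k_n^2\}$ keeps the sequence in $[1;+\infty)$ with limit $1$ and that $\{k_n^2\}$ is exactly the sequence witnessing asymptotic $0$ - strict pseudocontractiveness, so that the modified $\epsilon_n$ is the correct error term and the estimate $(\ref{eq:2*})$ carries over verbatim.
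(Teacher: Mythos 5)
Your proposal is correct and follows exactly the paper's own route: square the asymptotic nonexpansiveness inequality to see that each $S_j$ is asymptotically $0$ - strictly pseudocontractive with sequence $\{k_n^2\}$, then invoke Theorem \ref{theo1} with $\kappa=\beta_n=0$, which turns Step 3 into $z_n^j=\alpha_n x_n+(1-\alpha_n)S_j^n\bar{y}_n$ and the error term into $\epsilon_n=(k_n^2-1)(||x_n||+\omega)^2$. Your write-up simply makes explicit the bookkeeping the paper leaves implicit.
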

\begin{proof}
Since $S_j$ is an asymptotically nonexpansive mapping with the sequence $\left\{k_n\right\}\subset [1,\infty), k_n \to 1$ as $n\to\infty$, $S_j$ is an 
asymptotically $0$ - strictly pseudocontraction mapping with the sequence $\left\{k_n^2\right\}\subset [1,\infty), k_n^2 \to 1$ as $n\to\infty$. 
Using Theorem $\ref{theo1}$ with $\kappa=\beta_n=0$, we obtain the desired conclusion.
\end{proof}
For a finite family of $\kappa$ - strictly pseudocontractive mappings, the assumption of the boundedness of the set $F$ is redundant. 
We have the following Algorithm.
\begin{algorithm}\label{Algor.2}
\textbf{Initialization.} Choose $x_0\in C$ and set $n:=0$, $C_0=C$. The control parameter sequences $\left\{\alpha_k\right\}$, $\left\{\beta_k\right\}$, $\left\{r_k\right\}$ 
satisfy the following conditions
\begin{itemize}
\item [$(a)$] $0<\alpha_k<1$, $\lim_{k\to\infty}\sup\alpha_k<1$;
\item [$(b)$] $\kappa\le\beta_k\le b<1$ for some $b\in (\kappa;1)$;
\item [$(c)$] $0<d\le r_k\le e<2\alpha$.
\end{itemize}
\textbf{Step 1.} Find intermediate approximations $y_n^i$ in parallel
$$
y_n^i=T_{r_n}^{f_i}\left(x_n-r_nA_i(x_n)\right),~i=1,\ldots,N.
$$
\textbf{Step 2.} Choose the furthest element from $x_n$ among all $y_n^i$, i.e., 
$$ i_n = {\rm argmax}\{||y_n^i - x_n||: i =1,\ldots,N\},\bar{y}_n:=y^{i_n}_n. $$
\textbf{Step 3.} Find intermediate approximations $z_n^j$ in parallel
$$ z_n^j=\alpha_n x_n+(1-\alpha_n)\left(\beta_n\bar{y}_n+(1-\beta_n) S_j \bar{y}_n\right), j=1,\ldots,M. $$
\textbf{Step 4.} Choose the furthest element from $x_n$ among all $z_n^j$, i.e., 
$$ j_n= {\rm argmax}\{||z_n^j - x_n||: j =1,\ldots,M\},\bar{z}_n:=z^{j_n}_n. $$
\textbf{Step 5.} Construct the closed convex subset $C_{n+1}$ of C
$$
C_{n+1} = \{v \in C_n: ||\bar{z}_n - v||\leq ||x_n-v||\}.
$$
\textbf{Step 6.} The next approximation $x_{n+1}$ is defined as the projection of $x_0$ onto $C_{n+1}$, i.e.,
$$ x_{n+1}= P_{C_{n+1}}(x_0). $$
\textbf{Step 7.} Set $n:=n+1$ and go to \textbf{Step 1}.
\end{algorithm}
\begin{theorem}\label{theo2} Let $C$ be a nonempty closed convex subset of a Hilbert space $H$. Suppose that $\left\{f_i\right\}^N_{i=1}$ is a 
finite family of bifunctions satisfying the conditions $(A1)-(A4)$; $\left\{A_i\right\}^N_{i=1}$ is a finite family of $\alpha$ - inverse strongly monotone 
mappings; $\left\{S_j\right\}^M_{j=1}$ is a finite family of $\kappa$ - strictly pseudocontractive mappings. Moreover, suppose that the solution set 
$F$ is nonempty. Then the sequences $\left\{x_n\right\},\left\{y_n^i\right\}$ and $\left\{z_n^j\right\}$ generated by Algorithm $\ref{Algor.2}$ 
converge strongly to $P_F x_0$.
\end{theorem}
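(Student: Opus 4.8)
The plan is to mirror the proof of Theorem~\ref{theo1} almost verbatim, exploiting that a $\kappa$-strictly pseudocontractive mapping is the non-asymptotic ($n=1$) instance of the class treated there, so that $k_n\equiv 1$ and the error term $\epsilon_n$ vanishes. First I would establish the analogue of Lemma~\ref{lem.well-posed}: each $C_n$ is closed and convex (rewrite $C_{n+1}=C_n\cap\{v\in H:2\langle v,x_n-\bar z_n\rangle\le\|x_n\|^2-\|\bar z_n\|^2\}$ as a half-space intersected with $C_n$ and argue by induction), and $F\subset C_n$ for all $n$. For the inclusion, put $S_{j,\beta_n}=\beta_n I+(1-\beta_n)S_j$ and note, via Lemma~\ref{lem.aux} together with the $\kappa$-strict pseudocontractiveness of $S_j$ and the hypothesis $\beta_n\ge\kappa$, that
$$\|S_{j,\beta_n}x-S_{j,\beta_n}y\|^2\le\|x-y\|^2-(1-\beta_n)(\beta_n-\kappa)\|(I-S_j)x-(I-S_j)y\|^2\le\|x-y\|^2,$$
so $S_{j,\beta_n}$ is nonexpansive. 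Combining this with the nonexpansiveness of $T_{r_n}^{f_{i_n}}$ (Lemma~\ref{Tr-ClosedConvex}) and of $I-r_nA_{i_n}$ (Remark~\ref{remark3}, using $0<r_n<2\alpha$) gives, for $u\in F$, $\|\bar y_n-u\|\le\|x_n-u\|$ and hence $\|\bar z_n-u\|^2\le\alpha_n\|x_n-u\|^2+(1-\alpha_n)\|\bar y_n-u\|^2\le\|x_n-u\|^2$, i.e. $u\in C_{n+1}$; since $F\ne\emptyset$, $x_{n+1}=P_{C_{n+1}}x_0$ is well defined.

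Next I would reproduce the argument of Lemma~\ref{lem.limits}. From $C_{n+1}\subset C_n$ and $x_n=P_{C_n}x_0$ the sequence $\{\|x_n-x_0\|\}$ is nondecreasing, and it is bounded because $\|x_n-x_0\|\le\|u-x_0\|$ for any $u\in F$; then $\|x_m-x_n\|^2\le\|x_m-x_0\|^2-\|x_n-x_0\|^2$, from $(\ref{eq:ProperOfPC})$, forces $\{x_n\}$ to be Cauchy, say $x_n\to p\in C$. Since $x_{n+1}\in C_{n+1}$ gives $\|\bar z_n-x_{n+1}\|\le\|x_n-x_{n+1}\|\to0$, I get $\|\bar z_n-x_n\|\to0$, hence $\|z_n^j-x_n\|\to0$ for all $j$ by the choice of $j_n$, and then $\|S_{j,\beta_n}\bar y_n-x_n\|\to0$ because $\limsup_n\alpha_n<1$. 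The firmly nonexpansive estimate for $T_{r_n}^{f_{i_n}}$, exactly as in $(\ref{eq:12})$--$(\ref{eq:16})$, produces $\|A_{i_n}x_n-A_{i_n}u\|\to0$ and then $\|x_n-\bar y_n\|\to0$, so $\|x_n-y_n^i\|\to0$ for every $i$; combining $\|S_{j,\beta_n}\bar y_n-x_n\|\to0$ with $\|x_n-\bar y_n\|\to0$ and $\beta_n\le b<1$ yields $\|S_j\bar y_n-\bar y_n\|\to0$, whence $\|S_jx_n-x_n\|\to0$ for all $j$ — with no asymptotic correction needed.

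Finally I would run the limit-point argument of Lemma~\ref{lem.pinF}. Continuity of each $S_j$ (a $\kappa$-strict pseudocontraction is $\tfrac{1+\kappa}{1-\kappa}$-Lipschitz, as one checks directly from the defining inequality) together with $\|S_jx_n-x_n\|\to0$ and $x_n\to p$ gives $S_jp=p$, so $p\in\cap_{j=1}^M F(S_j)$. For the equilibrium part, from $y_n^i=T_{r_n}^{f_i}(x_n-r_nA_ix_n)$ one has $f_i(y_n^i,y)+\langle A_ix_n,y-y_n^i\rangle+\tfrac1{r_n}\langle y-y_n^i,y_n^i-x_n\rangle\ge0$; using $\|y_n^i-x_n\|/r_n\to0$ (since $r_n\ge d>0$), $\|A_iy_n^i-A_ix_n\|\to0$ ($A_i$ Lipschitz), $(A2)$, then inserting $y_t=ty+(1-t)p$ and invoking $(A1)$, $(A3)$, $(A4)$ exactly as in the proof of Lemma~\ref{lem.pinF}, I obtain $f_i(p,y)+\langle A_ip,y-p\rangle\ge0$ for all $y\in C$, i.e. $p\in\cap_{i=1}^N GEP(f_i,A_i)$. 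Hence $p\in F$, and by the estimates above $\{y_n^i\}$ and $\{z_n^j\}$ share the limit of $\{x_n\}$. To identify $p$ with $P_Fx_0$, pass to the limit in $\|x_n-x_0\|\le\|P_Fx_0-x_0\|$ (valid since $P_Fx_0\in F\subset C_n$) to get $\|p-x_0\|\le\|P_Fx_0-x_0\|$, and conclude $p=P_Fx_0$ by uniqueness of the metric projection.

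The main obstacle is essentially conceptual rather than technical: there is no new difficulty beyond Theorem~\ref{theo1}, and the one point deserving explicit attention is that the condition $\beta_n\ge\kappa$ makes $\beta_n I+(1-\beta_n)S_j$ nonexpansive, which is precisely what allows the term $\epsilon_n$ to be dropped from the definition of $C_{n+1}$ and, in turn, renders the boundedness hypothesis on $F$ unnecessary. Everything else is a specialization ($k_n\equiv1$, $\epsilon_n\equiv0$, $S_j^n$ replaced by $S_j$) of computations already carried out in Lemmas~\ref{lem.well-posed}--\ref{lem.pinF}.
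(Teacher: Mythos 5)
Your proposal is correct and follows essentially the same route as the paper, which likewise proves Theorem~\ref{theo2} by viewing each $S_j$ as asymptotically $\kappa$-strictly pseudocontractive with $k_n\equiv 1$, setting $\epsilon_n\equiv 0$, and rerunning the arguments of Lemmas~\ref{lem.well-posed}--\ref{lem.pinF}. Your observation that $\beta_n\ge\kappa$ makes $\beta_n I+(1-\beta_n)S_j$ nonexpansive is exactly the mechanism the paper relies on, and your Lipschitz constant $\tfrac{1+\kappa}{1-\kappa}$ for $S_j$ is in fact the correct one (the paper's specialization of Lemma~\ref{lem.demiclose}(iii) to $k_n=1$ yields $L=1$ only because of a typo in that formula; continuity is all that is needed either way).
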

\begin{proof}
Since $S_j$ is a $\kappa$ - strictly pseudocontractive mapping, $S_j$ is an asymptotically $\kappa$ - strictly pseudocontractive mapping with the 
sequence $k_n=1$ for all $n\ge 1$. Putting $\epsilon_n=0$, by the same arguments as in the proof of Theorem $\ref{theo1}$ we obtain $F, C_n$ 
are closed convex subsets of $C$. Moreover, $F\subset C_n$ for all $n\ge 0$ and 
$$
\lim_{n\to\infty}x_n=\lim_{n\to\infty}y_n^i=\lim_{n\to\infty}z_n^j= p\in C.
$$
for all $i=1,\ldots,N$ and $j=1,\ldots,M$. Since $S_j$ is an asymptotically $\kappa$ - strictly pseudocontractive mapping 
with the sequence $k_n=1$, from Lemma $\ref{lem.demiclose}$, we see that $S_j$ is $1$ - Lipschitz continuous. By arguing 
similarly to $(\ref{eq:21})$, we obtain
$$
\lim_{n\to\infty}||S_j x_n-x_n||=0,\quad j=1,\ldots,M.
$$
The rest of the proof  of Theorem $\ref{theo2}$ is similar to the one of Theorem $\ref{theo1}$.
\end{proof}
\begin{corollary}\label{cor5}
Suppose that $\left\{f_i\right\}^N_{i=1}$, $\left\{S_j\right\}^M_{j=1}$ and control parameter sequences $\left\{\alpha_n\right\}$, $\left\{\beta_n\right\}$, 
$\left\{r_n\right\}$ satisfy all conditions in Theorem $\ref{theo2}$. In addition, the solution set $F=\left(\cap_{i=1}^N EP(f_i,C)\right)$ $\bigcap$ 
$\left(\cap_{j=1}^M F(S_j)\right)$ is nonempty. Let $\left\{x_n\right\}$ be the sequence generated by the following manner:
\begin{equation}\label{algor.cor5}
\left\{
\begin{array}{ll}
&x_0\in C_0:=C,\\
&y_n^i=T_{r_n}^{f_i}x_n, i=1,\ldots,N,\\
&i_n=\arg\max\left\{||y_{n}^i-x_n||:i=1,\ldots, N\right\}, \bar{y}_n=y_n^{i_n},\\
&z_n^j=\alpha_n x_n+(1-\alpha_n)\left(\beta_n \bar{y}_n+(1-\beta_n) S_j \bar{y}_n\right), j=1,\ldots,M,\\
&j_n= {\rm argmax}\{||z_n^j - x_n||: j =1,\ldots,M\},\bar{z}_n:=z^{j_n}_n,\\
&C_{n+1} = \{v \in C_n: ||\bar{z}_n - v||\leq ||x_n-v||\},\\
&x_{n+1}= P_{C_{n+1}}(x_0), n\ge 0.
\end{array}
\right.
\end{equation} 
Then, the sequence $\left\{x_n\right\}$ converges strongly to $P_F x_0$.
\end{corollary}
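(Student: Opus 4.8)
The plan is to obtain Corollary \ref{cor5} as an immediate specialization of Theorem \ref{theo2}, by recognizing the scheme (\ref{algor.cor5}) as Algorithm \ref{Algor.2} run with every operator $A_i$ equal to the zero operator. So the only real work is to verify that the zero operator is admissible in Theorem \ref{theo2} and that the substitution $A_i\equiv 0$ collapses Algorithm \ref{Algor.2} exactly onto (\ref{algor.cor5}).

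First I would check admissibility. The zero operator is $\alpha$-inverse strongly monotone for \emph{every} $\alpha>0$, since $\langle 0-0,x-y\rangle=0=\alpha\|0-0\|^2$; in particular, once the control sequence $\{r_n\}$ with $0<d\le r_n\le e$ is prescribed, we may fix any $\alpha>e/2$ so that condition $(c)$ of Algorithm \ref{Algor.2} holds. Next, with $A_i\equiv 0$ the defining inequality of the generalized equilibrium problem, $f_i(x,y)+\langle A_i x,y-x\rangle\ge 0$, becomes $f_i(x,y)\ge 0$, so $GEP(f_i,0)=EP(f_i,C)$. Hence the set $F=\bigl(\cap_{i=1}^N GEP(f_i,A_i)\bigr)\cap\bigl(\cap_{j=1}^M F(S_j)\bigr)$ of Theorem \ref{theo2} becomes precisely $\bigl(\cap_{i=1}^N EP(f_i,C)\bigr)\cap\bigl(\cap_{j=1}^M F(S_j)\bigr)$, which is the set assumed nonempty in the corollary; recall that for the $\kappa$-strictly pseudocontractive case (Theorem \ref{theo2}) boundedness of $F$ is not required, so nonemptiness is all we need.

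Then I would carry out the reduction of the iteration. In Algorithm \ref{Algor.2}, Step 1 produces $y_n^i=T_{r_n}^{f_i}\bigl(x_n-r_nA_i(x_n)\bigr)$, which for $A_i\equiv 0$ is exactly $y_n^i=T_{r_n}^{f_i}x_n$, as in (\ref{algor.cor5}); Steps 2--6 do not involve the $A_i$ at all and are copied verbatim. Therefore (\ref{algor.cor5}) is Algorithm \ref{Algor.2} applied to the data $\{f_i\}_{i=1}^N$, $\{A_i\equiv 0\}_{i=1}^N$, $\{S_j\}_{j=1}^M$ with the given $\{\alpha_n\},\{\beta_n\},\{r_n\}$, and since all hypotheses of Theorem \ref{theo2} are met, that theorem yields the strong convergence of $\{x_n\}$, $\{y_n^i\}$, $\{z_n^j\}$ to $P_F x_0$.

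There is essentially no obstacle here; the only point that deserves a careful sentence rather than a silent pass is that the constant $\alpha$ in "$\alpha$-inverse strongly monotone" is not a fixed datum but must be selected \emph{after} the bound $e$ on $\{r_n\}$ is known, which is why one takes $\alpha>e/2$ — everything else is a direct substitution into Theorem \ref{theo2}.
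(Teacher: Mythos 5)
Your proposal is correct and follows exactly the paper's own one-line argument: Corollary \ref{cor5} is obtained by applying Theorem \ref{theo2} with $A_i\equiv 0$, under which $GEP(f_i,0)=EP(f_i,C)$ and Step 1 of Algorithm \ref{Algor.2} reduces to $y_n^i=T_{r_n}^{f_i}x_n$. Your extra remark that the zero operator is $\alpha$-inverse strongly monotone for every $\alpha>0$, so one may pick $\alpha>e/2$ to satisfy condition $(c)$, is a worthwhile detail the paper leaves implicit.
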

\begin{proof}
Theorem $\ref{theo2}$ with $A_i=0$ ensures that the sequence $\left\{x_n\right\}$ converges strongly to $P_F x_0$.
\end{proof}
\begin{remark}
Corollary $\ref{cor5}$ gives a parallel hybrid algorithm which improves announced results in \cite{D2010} .
\end{remark}
\section{Numerical example}\label{numerical.example}
Let $H$ be  the set of real numbers with the strandard inner product $\left\langle x,y\right\rangle=xy$ and the induced norm 
$||x||=|x|$ for all $x,y\in H$. We condider a finite family of mappings $\left\{S_j\right\}_{j=1}^M$ on $C=[-1,1]$ as follows: $S_j(x)=x$ for all $-1\le x< 0$ 
and $S_j(x)=x-c_j x^2$ for all $0\le x\le 1$, where $1<c_j< 2$, $j=1,\ldots,M$. Putting $A_j(x)=0$ for all $-1\le x< 0$ and $A_j(x)=c_j x^2$ for all 
$0\le x\le 1$. A straightforward computation implies that $A_j$ is $c_j/4$ - inverse strongly monotone. By Remark $\ref{rem.strict_pseudocontraction}$, 
$S_j=I-A_j$ is $\kappa_j$ - strictly pseudocontractive mapping with $\kappa_j=1-c_j/2$. Besides, for each fixed real number $\frac{2}{c_j}-1<\epsilon<1$, 
we have
$$ |S_j(\epsilon)-S_j(1)|=|1-\epsilon||c_j(1+\epsilon)-1|>|1-\epsilon|. $$
Therefore, $S_j$ is not nonexpansive. It is clear that $\cap_{j=1}^M F(S_j)=[-1,0]$.

Let $f_i:C\times C\to \Re$ be bifunctions defined as follows:
$$
f_i(x,y)=B_i(x)(y-x), \quad \forall x,y\in C, i=1,\ldots,N,
$$
where $B_i(x)=0$ for all $x\in [-1,\xi_i)$ and $B_i(x)=\tan(x-\xi_i)-x+\xi_i$ for all $x\in [\xi_i,1]$, and $-1<\xi_1<\ldots<\xi_N<1$. All conditions $(A1), (A3), (A4)$ are satisfied automatically. Since $B_i(x)$ is nondecreasing, $f_i(x,y)$ is monotone. Finally,
$$ f_i(x,y)=B_i(x)(y-x)\ge 0,\quad \forall y\in [-1,1] $$
if and only if $-1\le x\le \xi_i$, i.e., $EP(f_i,C)=[-1,\xi_i]$. Therefore, $\cap_{i=1}^N EP(f_i,C)=[0,\xi_1]$ and
$$ F=\left(\cap_{j=1}^M F(S_j)\right) \bigcap \left(\cap_{i=1}^N EP(f_i,C)\right)=[0,\xi_1].$$
By Algorithm $(\ref{algor.cor5})$ we have
\begin{equation}\label{algor.cor5.example}
\left\{
\begin{array}{ll}
&y_n^i =T_{r_n}^{f_i}x_n , i=1,\ldots,N,\\
&i_n=\arg\max\left\{||y_{n}^i-x_n||:i=1,\ldots, N\right\}, \bar{y}_n=y_n^{i_n},\\
&z_n^j=\alpha_n x_n+(1-\alpha_n)\left(\beta_n \bar{y}_n+(1-\beta_n) S_j \bar{y}_n\right), j=1,\ldots,M,\\
&j_n= {\rm argmax}\{||z_n^j - x_n||: j =1,\ldots,M\},\bar{z}_n:=z^{j_n}_n,\\
&C_{n+1} = \{v \in C_n: ||\bar{z}_n - v||\leq ||x_n-v||\},\\
&x_{n+1}= P_{C_{n+1}}(x_0), n\ge 0.
\end{array}
\right.
\end{equation} 
We choose $x_0=1$ and $r_n=1$ for all $n\ge 0$. By Corollary $\ref{cor5}$ then $x_n \to x^\dagger:=P_F x_0= \xi_1$. We see that $y_n^i=T_{r_n}^{f_i}x_n$ is equivalent to the following problem: Find $y_n^i\in [-1,1]$ such that 
$$B_i(y_n^i)(y-y_n^i)+(y-y_n^i)(y_n^i-x_n)\ge 0, \quad \forall y\in [-1,1]$$
or $$ (y-y_n^i)(B_i(y_n^i)+y_n^i-x_n)\ge 0, \quad \forall y\in [-1,1].$$
The last inequality is equivalent to the equation $B_i(y_n^i)+y_n^i=x_n$. Therefore, if $-1\le x_n<\xi_i$ then $y_n^i=x_n$. Otherwise, 
if $\xi_i\le x_n\le 1$ then $y_n^i=\xi_i+\arctan(x_n-\xi_i)$. Among all $y_n^i$, the furthest element from $x_n$, denoted by $\bar{y}_n$, is chosen. 
From $(\ref{algor.cor5.example})$ we have
$$ z_n^j=\alpha_n x_n+(1-\alpha_n)\left(\beta_n \bar{y}_n+(1-\beta_n) S_j \bar{y}_n\right). $$
If  $\bar{y}_n<0$ then $z_n^j=\alpha_n x_n+(1-\alpha_n)\bar{y}_n$. Otherwise, if $\bar{y}_n\ge 0$ then 
$z_n^j=\alpha_n x_n+(1-\alpha_n)\left(\bar{y}_n-c_j(1-\beta_n) \bar{y}_n^2\right)$. Thus, we can choose the furthest element from $x_n$ among all 
$z_n^j$ and denote by $\bar{z}_n$. We have $C_{n+1} = \{v \in C_n: |\bar{z}_n - v|\leq |x_n-v|\}$. By the induction, we can show that 
$$ C_{n+1}=[-1,\frac{x_n+\bar{z}_n}{2}].$$
Therefore, $x_{n+1}=P_{C_{n+1}}x_0=\frac{x_n+\bar{z}_n}{2}$. We obtain the following algorithm.\\
\textbf{Initialization.} Choose $x_0=1$, $\rm TOL=10^{-l}~(l=3,4,6)$, $ N =2\times 10^6, M=3\times 10^6,\xi_i=-1+\frac{2i}{N+1}, i=1,\ldots,N, 
c_j=2-\frac{j}{M+1},j=1,\ldots,M, \kappa=\max\left\{k_j:j=1,\ldots,M\right\}=\frac{M}{2(M+1)}, r_k=1, \beta_k=\frac{M}{2(M+1)}, 
\alpha_k=\frac{1}{k+1}, k=1,2,\ldots$. Set $ n:=0.$\\
\textbf{Step 1.} Find intermediate approximations $y_n^i$ in parallel  ($i=1,\ldots,N$)
$$
y_n^i=
\left\{
\begin{array}{ll}
&x_n\quad \mbox{ if }\quad -1\le x_n<\xi_i,\\ 
&\xi_i+\arctan(x_n-\xi_i)\quad \mbox{ if }\quad \xi_i\le x_n\le 1.
\end{array}
\right.
$$
\textbf{Step 2.} Choose the furthest element from $x_n$ among all $y_n^i$ ($i=1,\ldots,N$)
$$
i_n=\arg\max\left\{||y_{n}^i-x_n||:i=1,\ldots, N\right\}; \bar{y}_n=y_n^{i_n}.
$$
\textbf{Step 3.} Find intermediate approximations $z_n^j$ in parallel ($j=1,\ldots,M$)
$$
z_n^j=
\left\{
\begin{array}{ll}
&\alpha_n x_n+(1-\alpha_n)\bar{y}_n\quad \mbox{ if }\quad -1<\bar{y}_n<0,\\ 
&\alpha_n x_n+(1-\alpha_n)\left(\bar{y}_n-c_j(1-\beta_n) \bar{y}_n^2\right)\quad \mbox{ if }\quad 0\le \bar{y}_n\le 1.
\end{array}
\right.
$$
\textbf{Step 4.} Choose the furthest element from $x_n$ among all $z_n^j$ ($j=1,\ldots,M$)
$$
j_n=\arg\max\left\{||z_{n}^j-x_n||:j=1,\ldots, M\right\}; \bar{z}_n=z_n^{j_n}.
$$
\textbf{Step 5.} Compute $x_{n+1}=\frac{x_n+\bar{z}_n}{2}$.\\
\textbf{Step 6.} If $|x_{n+1}-\xi_1|\le \rm TOL$ then stop. Otherwise, set $n:=n+1$ and go to \textbf{Step 1.}

\end{document}